\theoremstyle{plain}
\newtheorem{The}{Theorem}[section]    
\newtheorem{Lem}[The]{Lemma}
\newtheorem{Cor}[The]{Corollary}
\newtheorem{Pro}[The]{Proposition}
\theoremstyle{remark}
\newtheorem{Rem}[The]{Remark}
\theoremstyle{definition}
\numberwithin{equation}{section}
\newcommand{\Rmnum}[1]{\expandafter\@slowromancap\romannumeral #1@}
\begin{document}
\title{Fractional matching preclusion number of graphs\thanks{This work is supported by NSFC (Grant No. 11371180).}}

\author{Ruizhi Lin and Heping Zhang\thanks{Corresponding author.}}
\date{{\small School of Mathematics and Statistics, Lanzhou University,
 Lanzhou, Gansu 730000, P.R. China}\\
{\small E-mails:\ linrzh08@lzu.edu.cn, zhanghp@lzu.edu.cn}}

\maketitle

\begin{abstract}
    Let $G$ be a graph with an even number of vertices. The matching preclusion number of $G$, denoted by $mp(G)$, is the minimum number of edges whose deletion leaves the resulting graph without a perfect matching. We introduced a $0$-$1$ linear programming which can be used to find matching preclusion number of graphs. In this paper, by relaxing of the $0$-$1$ linear programming we obtain a linear programming and call its optimal objective value as fractional matching preclusion number of graph $G$, denoted by $mp_f(G)$. We show $mp_f(G)$ can be computed in polynomial time for any graph $G$. By using perfect matching polytope, we transform it as a new linear programming whose optimal value equals the reciprocal of $mp_f(G)$. For bipartite graph $G$, we obtain an explicit formula for $mp_f(G)$ and show that $\lfloor mp_f(G) \rfloor$ is the maximum integer $k$ such that $G$ has a $k$-factor. Moreover, for any two bipartite graphs $G$ and $H$, we show $mp_f(G \square H) \geqslant mp_f(G)+\lfloor mp_f(H) \rfloor$, where $G \square H$ is the Cartesian product of $G$ and $H$.

    \vskip 0.1 in

    \noindent {\bf Keywords:} \ Matching preclusion; $0$-$1$ linear programming; Linear programming; Perfect matching polytope; Flow

    \vskip 0.1 in

    \noindent {\bf Mathematics Subject Classification:} \ 90C27; 90C35; 05C72

    \medskip
\end{abstract}
\section{Introduction}
    In recent decades, many networks are proposed to serve as the topology of a large-scare parallel and distributed system. In practice, edge (link) failures may occur in a network, so it is important to consider networks with faulty elements. For measuring the robustness of interconnection networks under the condition of edge failure Brigham et al. \cite{matching_preclusion} first introduced the concept of matching preclusion.
    Let $G$ be a graph with an even number of vertices. A \emph{perfect matching} in a graph is a set of edges such that every vertex is incident with exactly one edge in this set. A set of edges $F$ of $G$ is called a \emph{matching preclusion set} if $G-F$ has no perfect matching, and the \emph{matching preclusion number} of $G$, denoted by $mp(G)$, is the cardinality of the matching preclusion set with smallest size.

    Since matching preclusion problem was proposed, it has been studied for many graphs, such as hypercube \cite{matching_preclusion}, $k$-ary $n$-cube \cite{k_ary_n_cube}, tori network \cite{product}, balanced hypercube \cite{balanced_hypercube}, folded Petersen cube \cite{folded_petersen_cube}, cube-connected cycle \cite{cube_connected_cycle} and pancake and burnt pancake graph \cite{pancake1,pancake2}. Furthermore, there are also some papers studying matching preclusion for some classes of graph, such as bipartite graph \cite{bipartite1,bipartite2}, regular graph \cite{regular,LP_of_MP}, vertex-transitive graph \cite{vertex_transitive} and Cartesian product of graphs \cite{product,LP_of_MP}. In complexity issue, M. Lacroix et al. \cite{NPC_MP} showed that matching preclusion problem is NP-complete even for bipartite graphs.

    Here we define some graph theoretical terms and notations first.
    Let $G$ be an undirected graph with the vertex set $V(G)$ and the edge set $E(G)$. For a vertex $v \in V(G)$, we define the set of all neighbour of $v$, the set of all edges incident with $v$ and degree of $v$ by $N_G(v)$, $\partial_G(v)$ and $d_G(v)$, respectively. Let $X$ and $Y$ be two vertex sets of $G$. We denote the set of edges of $G$ with one end in $X$ and the other end in $Y$ by $E_G(X,Y)$ and $e_G(X,Y)=|E_G(X,Y)|$. The set $E_G(X,V(G) \backslash X)$ is called the \emph{edge cut} of $G$ associated with $X$ and is denoted by $\partial_G(X)$, and we say it is \emph{trivial} if $|X|=1$ or $|V(G)\backslash X|=1$. An edge cut $\partial_G(X)$ is called an \emph{odd cut} if $|X|$ and $|V(G)\backslash X|$ is odd.
    Similarly, if $D$ is a directed graph and $v \in V(D)$, then we denote the set of all in-neighbour of $v$, the set of all out-neighbour of $v$, the set of all arcs with $v$ being its tail and the set of all arcs with $v$ being its head by $N^-_D(v)$, $N^+_D(v)$, $\partial^-_D(v)$ and $\partial^+_D(v)$, respectively.  If there is no ambiguity, then we can omit the subscripts of these notations.

    Let the linear space $\mathbb{R}^{E(G)}$ be the set of all column vectors whose entries are indexed by the edges of $G$ over real field $\mathbb{R}$. Every subset $S \subseteq E(G)$ can be described by its \emph{incidence vector}, an $|E(G)|$ long column vector, $\bm{q}^S = (\alpha_e \mid e \in E(G)) \in \mathbb{R}^{E(G)}$, where
    \begin{equation*}
        \alpha_e=\left\{
        \begin{aligned}
            &1, &\quad & \text{if } e \in S, \\
            &0, &{}    & \text{otherwise.}  \\
        \end{aligned}\right.
    \end{equation*}
    Let the vertex-edge incidence matrix of $G$ be $M_G$. Noting that for each $v \in V(G)$, there is a row in $M_G$ corresponding to incidence vector of $\partial_G(v)$, we denote this row vector by $\bm{\partial}_G(v)$.

    In \cite{LP_of_MP}, we introduced a $0$-$1$ linear programming for matching preclusion number of $G$. Let $G$ be a graph with an even number of vertices. We denote $\mathcal{M}(G)$ be the set consisting of its all perfect matchings and $\bm{y}$ be a vector in $\mathbb{R}^{E(G)}$. The following $0$-$1$ linear programming (MP) can be used to find the matching preclusion number of $G$.

    \vspace{0.5\baselineskip}
    \noindent (MP):
    \vspace{-1.29\baselineskip}
    \begin{alignat}{2}
        \min \quad        & \bm{1}^T \bm{y}                    &{}    & \notag \\ 
        \mbox{s.t.}\quad  & (\bm{q}^M)^T \bm{y} \geqslant 1,   &\quad & \mbox{for every } M \in \mathcal{M}(G) \label{MP1} \\
                          & y_e \in \{0,1\},                   &{}    & \mbox{for every } e \in E(G). \label{MP2}
    \end{alignat}

    We can see that Constraint \eqref{MP1} ensures that the edge set induced by $\bm{y}$ intersects every perfect matching in $\mathcal{M}(G)$. So we have the following proposition.
    \begin{Pro}[\cite{LP_of_MP}]
        \label{opt_MP}
        The optimal objective value of (MP) is equal to $mp(G)$.
    \end{Pro}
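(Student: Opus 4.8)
The plan is to exhibit a value‑preserving bijection between the feasible solutions of (MP) and the matching preclusion sets of $G$, so that the minimum of $\bm{1}^T\bm{y}$ coincides with the smallest cardinality of such a set, which is exactly $mp(G)$.

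First I would note that a vector $\bm{y} \in \mathbb{R}^{E(G)}$ satisfying Constraint \eqref{MP2} is precisely the incidence vector $\bm{q}^F$ of the edge subset $F := \{\, e \in E(G) : y_e = 1 \,\}$, and that under this identification $\bm{1}^T\bm{y} = |F|$. Next I would translate Constraint \eqref{MP1}: for a perfect matching $M \in \mathcal{M}(G)$ one has $(\bm{q}^M)^T\bm{y} = (\bm{q}^M)^T\bm{q}^F = |M \cap F|$, so $(\bm{q}^M)^T\bm{y} \geqslant 1$ holds for every $M \in \mathcal{M}(G)$ if and only if $F$ meets every perfect matching of $G$; by the definition of a perfect matching this is the same as saying that $G-F$ has no perfect matching, i.e.\ that $F$ is a matching preclusion set of $G$. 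Hence the feasible region of (MP) is exactly $\{\, \bm{q}^F : F \text{ is a matching preclusion set of } G \,\}$, and on it the objective takes the value $|F|$, so the optimal objective value of (MP) equals $\min\{\, |F| : F \text{ is a matching preclusion set of } G \,\} = mp(G)$.

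There is essentially no substantive obstacle here; the argument is a direct unwinding of the definitions, and the only points that deserve a word of care are the bookkeeping identity $(\bm{q}^M)^T\bm{q}^F = |M \cap F|$ and the degenerate case in which $G$ has no perfect matching at all. In that case $\mathcal{M}(G) = \varnothing$, Constraint \eqref{MP1} is vacuous, $\bm{y} = \bm{0}$ is optimal with value $0$, and $mp(G) = 0$ since the empty set is a matching preclusion set, so the claimed equality still holds.
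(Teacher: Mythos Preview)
Your proof is correct and follows precisely the approach the paper indicates: the sentence preceding the proposition already observes that Constraint~\eqref{MP1} forces the edge set induced by $\bm{y}$ to intersect every perfect matching, which is exactly the bijection you spell out in detail. The paper gives no further argument beyond this remark (the proposition is cited from~\cite{LP_of_MP}), so your write-up, including the degenerate case $\mathcal{M}(G)=\varnothing$, is if anything more complete.
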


    In \cite{LP_of_MP}, by applying this $0$-$1$ linear programming on $r$-regular graph $G$ we showed that $mp(G)=r$ if and only if each non-trivial odd cut of $G$ has at least $r$ edges.

    Fractional graph theory is a new branch of graph theory and widely studied in recent years. There are two principal methods to convert graph concepts from integer to fractional. The first is to formulate the concepts as integer programs and then to consider the linear programming relaxation. The second is to make use of the subadditivity lemma. Using these two methods, many fractional graph concepts were proposed, such as fractional matching number, fractional chromatic number, fractional chromatic index and so on. Many further ideas and results on fractional graph theory can be found in \cite{fractional_graph_theory}. Inspired by it, we relax the Constraint \eqref{MP2} in (MP) and get a new linear programming, denoted by (FMP), as follows:

    \vspace{0.32\baselineskip}
    \noindent(FMP):
    \vspace{-1.70\baselineskip}
    \begin{alignat}{2}
        \min \quad        & \bm{1}^T \bm{y}                    &{}    & \notag \\
        \mbox{s.t.}\quad  & (\bm{q}^M)^T \bm{y} \geqslant 1,   &\quad & \mbox{for every } M \in \mathcal{M}(G), \label{FMP1} \\
                          & y_e \geqslant 0,                   &{}    & \mbox{for every } e \in E(G). \label{FMP2}
    \end{alignat}

    Then we define the optimal objective value of (FMP) by the \emph{fractional matching precluison number} of $G$, denoted by $mp_f(G)$.
    It follows from the definition that $mp_f(G) \leqslant mp(G)$ for any graph $G$. Then it is natural to consider which graphs satisfy that $mp_f(G) = mp(G)$ and how large the difference between $mp(G)$ and $mp_f(G)$ can be.

    Recently, Y. Liu and W. Liu \cite{fractional_matching_preclusion} introduced a distinct graphic parameter also called fractional matching preclusion number of any graph $G$, denoted by $fmp(G)$. However, their idea is different from this paper, and they define $fmp(G)$ as the minimum number of edges from $G$ whose deletion leaves the resulting graph with no fractional perfect matching. Furthermore, they gave some propositions of this parameter, and then studied it for complete graphs, Petersen graph and twisted cubes.

    Network flow theory is very useful in our proof, then we introduce some notations first. Let $D$ be a directed graph and $f$ be a real-valued function defined on $E(D)$. We denote the \emph{excess} of $f$ at $v$ by
    \begin{equation*}
        x_f(v)=\sum\limits_{w \in N^+_D(v)} f(v,w)-\sum\limits_{u \in N^-_D(v)} f(u,v).
    \end{equation*}
    Let $s$ (the \emph{source}) and $t$ (the \emph{sink}) be two distinguished vertices in $D$. Then we say that $f$ is an \emph{$s-t$ flow} if $x_f(v)=0$ for all $v\in V(D) \backslash \{s,t\}$ (\emph{conservation condition}). Let $l$ and $c$ be a non-negative real function defined on $E(D)$, which are called \emph{lower bound} and \emph{capacity} of arc, respectively. We say that an $s-t$ flow $f$ is \emph{feasible} if $0 \leqslant f(u,v) \leqslant c(u,v)$ for all $(u,v) \in E(D)$ (\emph{capacity constraint}) and call the $x_f(s)$ the \emph{value} of $f$. 
    An \emph{$s-t$ cut} is an outcut $\partial_D^+(X)$, such that $s \in X$ and $t \in V(D) \backslash X$. The capacity of a cut $C=\partial_D^+(X)$ is the sum of the capacities of its arcs, denoted by $cap(C)$.
    In addition, we say that $f$ is a \emph{circulation} if $x_f(v)=0$ for all $v\in V(D)$, furthermore, $f$ is feasible if $l(u,v)\leqslant f(u,v) \leqslant c(u,v)$ for all $(u,v) \in E(D)$. For convenience, if $A,B$ are two vertex sets of $V(D)$ and $a,b$ are two vertices in $D$, then we denote $c(A,B)=\sum\limits_{u \in A,v \in B}c(u,v)$, $c(a,B)=c(\{a\},B)$ and $c(A,b)=c(A,\{b\})$.

    The rest of this article is organized as follows.
    In section \ref{mp_general_graph}, for any graph $G$, we show $mp_f(G)$ can be computed in polynomial time, and introduce a new linear programming whose optimal value equals the reciprocal of $mp_f(G)$.
    In section \ref{mp_bigraph}, for any bipartite graph $G$, we obtain an explicit formula for $mp_f(G)$ and an optimal solution of (FMP), and show that $\lfloor mp_f (G) \rfloor$ is the maximum integer $k$ such that $G$ has a $k$-factor. Moreover, for any positive integer $t$, we give an example $G_t$ with $mp(G_t)=t+1$ and $mp_f(G_t)=2$. 
    In section \ref{mp_bigraph_product}, we show that for any two bipartite graphs $G$ and $H$, $mp_f(G \square H) \geqslant mp_f(G)+\lfloor mp_f(H) \rfloor$.

\section{General graph}
    \label{mp_general_graph}
    Recall that matching preclusion problem is NP-complete even for bipartite graphs. However, we can show fractional matching preclusion problem can be solved for any graph in polynomial time and our main tool is equivalence of optimization and separation which is shown by M. Gr\"{o}tschel et al. in \cite{eq_sep_opt}.

    \begin{The}[\cite{eq_sep_opt}]
        \label{separation_optimization}
        For any rational polyhedron, the optimization problem is polynomially solvable if and only if the separation problem is polynomially solvable.
    \end{The}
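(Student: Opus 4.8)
The plan is to prove Theorem~\ref{separation_optimization} (the Gr\"otschel--Lov\'asz--Schrijver equivalence) by establishing the two implications separately. First I would pin down the model: the polyhedron $P\subseteq\mathbb{R}^{n}$ is not given explicitly but is \emph{well-described}, i.e.\ we are told $n$ together with a bound $\nu$ on the encoding length of the vertices and of the facet-defining inequalities of $P$, and ``polynomially solvable'' means ``in time polynomial in $n$, $\nu$ and the encoding length of the oracle input.'' The optimization oracle, given $c\in\mathbb{Q}^{n}$, returns an optimal vertex of $\max\{c^{T}x:x\in P\}$ or reports infeasibility or unboundedness; the separation oracle, given $y\in\mathbb{Q}^{n}$, either certifies $y\in P$ or returns $a\in\mathbb{Q}^{n}$ with $a^{T}y>\max\{a^{T}x:x\in P\}$.

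The implication ``separation $\Rightarrow$ optimization'' is the central one and is handled by the ellipsoid method. I would first reduce optimization to a polynomial number of feasibility questions ``is $P\cap\{x:c^{T}x\geqslant\gamma\}$ nonempty?'' by binary search on $\gamma$ over a range of length $2^{\mathrm{poly}(n,\nu)}$, the bound $\nu$ guaranteeing that distinct candidate optimal values are separated by at least $2^{-\mathrm{poly}(n,\nu)}$. Each feasibility question is answered by running the ellipsoid algorithm: start from a ball of radius $2^{\mathrm{poly}(n,\nu)}$ that must contain the relevant face of $P$ if it is nonempty; at each iteration call the separation oracle for $P\cap\{c^{T}x\geqslant\gamma\}$ at the centre of the current ellipsoid, and either stop with a feasible point or use the returned hyperplane to pass to the minimum-volume ellipsoid containing the appropriate half. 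Since the volume shrinks by a fixed factor $e^{-1/(2(n+1))}$ each step, after $\mathrm{poly}(n,\nu)$ steps either a feasible point has been produced or the volume has dropped below the a~priori lower bound that a nonempty (suitably perturbed, full-dimensional) $P\cap\{c^{T}x\geqslant\gamma\}$ must have, which certifies infeasibility. Finally the approximate point returned is rounded to an exact vertex of $P$ using continued fractions (simultaneous Diophantine approximation) together with the bound $\nu$.

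For the converse, ``optimization $\Rightarrow$ separation,'' I would argue by polarity. After translating by an interior point of $P$ and restricting to the affine hull of $P$ (both computable from the optimization oracle), we may assume $0\in\mathrm{int}\,P$. The polar $P^{\circ}=\{a:a^{T}x\leqslant 1\text{ for all }x\in P\}$ then has a polynomial-time \emph{separation} oracle for free, since testing $a\in P^{\circ}$ is exactly deciding whether $\max\{a^{T}x:x\in P\}\leqslant 1$, one call to the optimization oracle for $P$. Hence by the first implication we can \emph{optimize} over $P^{\circ}$ in polynomial time; and an optimal solution of $\max\{y^{T}a:a\in P^{\circ}\}$ has value $\leqslant 1$ precisely when $y\in P$, while if it exceeds $1$ the optimal $a$ yields a valid separating inequality $a^{T}x\leqslant 1<a^{T}y$. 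Unboundedness and non-full-dimensionality are dealt with by first intersecting with a box of size $2^{\mathrm{poly}(n,\nu)}$ and working inside the affine hull, the bound $\nu$ again controlling the loss incurred.

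The main obstacle, and the reason the statement is quoted as a black box rather than reproved, is the numerical bookkeeping: one must run the ellipsoid method in finite precision, prove that the rounded ellipsoids still contain $P$, bound the bit-lengths of all intermediate quantities, and --- most delicately --- convert the approximately optimal points and values into \emph{exact} ones, which is exactly where well-describedness is indispensable, the equivalence genuinely failing without it. Carrying this out in full is lengthy, so I would simply invoke \cite{eq_sep_opt}; in the present paper the only consequence needed is that the separation problem for (FMP) --- given $\bm{y}$ with $y_{e}\geqslant 0$ for all $e$, decide whether some $M\in\mathcal{M}(G)$ has $(\bm{q}^{M})^{T}\bm{y}<1$ --- is a minimum-weight perfect matching computation with weights $y_{e}$, hence polynomial by Edmonds' algorithm, so Theorem~\ref{separation_optimization} delivers a polynomial-time algorithm for $mp_{f}(G)$.
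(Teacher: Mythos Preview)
The paper does not prove this theorem at all: it is quoted verbatim from Gr\"otschel--Lov\'asz--Schrijver \cite{eq_sep_opt} as a black box and immediately applied. Your proposal goes well beyond what the paper does, supplying a (correct and standard) outline of the ellipsoid-method proof --- separation $\Rightarrow$ optimization via binary search plus the volume-shrinking ellipsoid iteration, and the converse via polarity --- together with an honest acknowledgement that the numerical analysis is where the real work lies and that one would in practice simply cite \cite{eq_sep_opt}. Your final paragraph also correctly anticipates exactly how the paper uses the result: the separation problem for (FMP) is a minimum-weight perfect matching instance, hence polynomial, so optimization over (FMP) is polynomial. In short, there is nothing to compare your argument against in the paper itself; your sketch is sound and strictly more informative than what the authors provide.
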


    We denote the polyhedron defined by Constraint \eqref{FMP1} and \eqref{FMP2} by $P$. So we can construct the separation problem corresponding to (FMP) as follows:
    \begin{quote}
        Given a rational vector $\bm{y} \in \mathbb{R}^{E(G)}$, either decide that $\bm{y} \in P$
        or, find a rational vector $\bm{w} \in \mathbb{R}^{E(G)}$ such that $\bm{w}^T\bm{x} > \bm{w}^T\bm{y}$ for all $\bm{x} \in P$.
    \end{quote}
    In order to solve this separation problem, we first verify Constraint \eqref{FMP2} for $\bm{y}$. If there is $f \in E(G)$ such that $y_f <0$, then we set $w_f=1$ and $w_e=0$ for all $e \in E(G) \backslash \{f\}$, so we have that $\bm{w}^T\bm{x} \geqslant 0 > \bm{w}^T\bm{y}$ for all $\bm{x} \in P$. 
    Thus, we suppose $\bm{y}$ satisfy Constraint \eqref{FMP2} and consider Constraint \eqref{FMP1}. We can regard the vector $\bm{y}$ as weights on edges of $G$, then according to the algorithm given in chapter 5.3 of \cite{combiatorial_book}, we can obtain the minimum weight perfect matching $M_0$ of $G$ in polynomial time with respect to vertex number of $G$. So if $(\bm{q}^{M_0})^T\bm{y} \geqslant 1$, then we can decide $\bm{y}$ satisfies Constraint \eqref{FMP1}, which means $\bm{y} \in P$, otherwise we set $\bm{w}=\bm{q}^{M_0}$ and have that $\bm{w}^T\bm{x} \geqslant 1 > \bm{w}^T\bm{y}$ for all $\bm{x} \in P$. Thus, we have the following lemma.
    \begin{Lem}
        \label{separation_FMP_polynomial_time}
        For any graph $G$, the separation problem corresponding to (FMP) can be solved in polynomial time.
    \end{Lem}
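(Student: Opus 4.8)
The plan is to describe an explicit algorithm that, given a rational $\bm{y} \in \mathbb{R}^{E(G)}$, runs in polynomial time and either certifies $\bm{y} \in P$ or outputs a rational $\bm{w}$ with $\bm{w}^T\bm{x} > \bm{w}^T\bm{y}$ for all $\bm{x} \in P$; I would organise it around the two constraint families \eqref{FMP2} and \eqref{FMP1}. First I would test \eqref{FMP2} by scanning the entries of $\bm{y}$, which costs $O(|E(G)|)$: if some entry $y_f$ is negative, I would return $\bm{w} = \bm{q}^{\{f\}}$, so that $\bm{w}^T\bm{x} = x_f \geqslant 0 > y_f = \bm{w}^T\bm{y}$ for every $\bm{x} \in P$ and we are done.

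Next, assuming $\bm{y} \geqslant \bm{0}$, I would regard $\bm{y}$ as nonnegative edge weights and compute a minimum-weight perfect matching $M_0$ of $G$; this is the one substantive step, and it is exactly the weighted perfect matching problem, solvable in strongly polynomial time by the procedure of Chapter~5.3 of \cite{combiatorial_book}. If $G$ has no perfect matching the routine reports this, the family \eqref{FMP1} is vacuous, $P=\{\bm{y}:\bm{y}\geqslant\bm{0}\}$, and the first phase has already certified $\bm{y}\in P$. Otherwise $M_0$ attains $\min_{M\in\mathcal{M}(G)}(\bm{q}^M)^T\bm{y}$, so: if $(\bm{q}^{M_0})^T\bm{y}\geqslant 1$ then every $M\in\mathcal{M}(G)$ satisfies $(\bm{q}^M)^T\bm{y}\geqslant(\bm{q}^{M_0})^T\bm{y}\geqslant 1$, all constraints of \eqref{FMP1} hold, and $\bm{y}\in P$; while if $(\bm{q}^{M_0})^T\bm{y}<1$ I would return $\bm{w}=\bm{q}^{M_0}$, and validity of \eqref{FMP1} over $P$ yields $\bm{w}^T\bm{x}\geqslant 1>\bm{w}^T\bm{y}$ for all $\bm{x}\in P$. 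The total cost is linear plus one matching computation, hence polynomial in the size of $G$ and the encoding length of $\bm{y}$.

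The only real obstacle is that the returned $\bm{w}$ must give a single strict inequality valid simultaneously for every point of $P$, not merely a constraint that $\bm{y}$ happens to violate. This is handled by always taking $\bm{w}$ to be the incidence vector attached to a violated constraint — the bad coordinate in the first phase, or the minimum-weight perfect matching $\bm{q}^{M_0}$ in the second — since that constraint is itself valid for $P$, so $\bm{w}^T\bm{x}$ is bounded below by its right-hand side uniformly over $P$ while $\bm{w}^T\bm{y}$ drops strictly below it. Locating such a violated \eqref{FMP1}-constraint without enumerating the exponentially many perfect matchings is exactly what the polynomial-time weighted matching algorithm provides.
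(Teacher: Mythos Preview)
Your proposal is correct and follows essentially the same approach as the paper: check the nonnegativity constraints directly, and reduce the separation over \eqref{FMP1} to a single minimum-weight perfect matching computation (citing the same Chapter~5.3 of \cite{combiatorial_book}), returning the incidence vector of the violated constraint as the separating hyperplane. Your treatment is slightly more careful in explicitly handling the case where $G$ has no perfect matching and in justifying why the returned $\bm{w}$ is valid uniformly over $P$, but the argument is the same.
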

    So by Theorem \ref{separation_optimization}, we obtain the following result.
    \begin{The}
        \label{FMP_polynomial_time}
        For any graph $G$, $mp_f(G)$ can be computed in polynomial time.
    \end{The}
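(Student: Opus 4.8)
The plan is to combine Lemma~\ref{separation_FMP_polynomial_time} with the equivalence of separation and optimization (Theorem~\ref{separation_optimization}). First I would record that the polyhedron $P$ cut out by \eqref{FMP1} and \eqref{FMP2} is a rational polyhedron in $\mathbb{R}^{E(G)}$ which is \emph{well-described} in the sense required by Theorem~\ref{separation_optimization}: every defining inequality is either $y_e \geqslant 0$ or $(\bm{q}^M)^T\bm{y} \geqslant 1$ with $\bm{q}^M$ a $0$-$1$ vector, so each such inequality has encoding length bounded by a polynomial in $|E(G)|$, independently of the (possibly exponential) number of perfect matchings in $\mathcal{M}(G)$. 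This is exactly the hypothesis under which Theorem~\ref{separation_optimization} may be applied to $P$.

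Next, Lemma~\ref{separation_FMP_polynomial_time} supplies a polynomial-time algorithm for the separation problem over $P$, so Theorem~\ref{separation_optimization} yields a polynomial-time algorithm for the optimization problem over $P$: given the objective vector $\bm{1}$, it either returns an optimal solution together with its value, or reports that the linear programming is infeasible or unbounded.

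Then I would rule out the two degenerate outcomes. The linear programming is feasible, since $\bm{y}=\bm{1}$ satisfies \eqref{FMP1} (because $(\bm{q}^M)^T\bm{1}=|M|\geqslant 1$) as well as \eqref{FMP2}; and it is bounded below, since $\bm{1}^T\bm{y}\geqslant 0$ for every $\bm{y}\in P$. Hence the algorithm returns an optimal solution of (FMP) and its optimal objective value, which by definition is $mp_f(G)$, in time polynomial in $|V(G)|$ and $|E(G)|$, as desired.

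I do not expect a genuine obstacle here; the substantive work has already been isolated in Lemma~\ref{separation_FMP_polynomial_time} (which itself rests on polynomial-time minimum-weight perfect matching). The only points that need care are the bookkeeping ones above: verifying that $P$ has polynomially bounded facet complexity so that Theorem~\ref{separation_optimization} genuinely applies, and checking feasibility and boundedness so that the value returned by the optimization routine is precisely $mp_f(G)$.
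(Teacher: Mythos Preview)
Your proposal is correct and follows exactly the paper's approach: apply Theorem~\ref{separation_optimization} to the polyhedron $P$ using the polynomial-time separation oracle from Lemma~\ref{separation_FMP_polynomial_time}. In fact the paper's own proof is the single sentence ``So by Theorem~\ref{separation_optimization}, we obtain the following result''; your added bookkeeping (bounded facet complexity, feasibility via $\bm{y}=\bm{1}$, boundedness below by $0$) just makes explicit what the paper leaves implicit.
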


    For further study of fractional matching preclusion number, we need to construct a linear programming to compute it with more direct constraints, and our idea comes from perfect matching polytope. First, we introduce the perfect matching polytope of a graph. Let $\bm{v}_1,\ldots,\bm{v}_m$ be vectors in $\mathbb{R}^n$. Vector $\bm{v}=\lambda_1\bm{v}_1+\ldots+\lambda_m\bm{v}_m$,($\lambda_i \in \mathbb{R}$) is called a \emph{linear combination} of $\bm{v}_1,\ldots,\bm{v}_m$. A \emph{convex combination} is a linear combination with $\lambda_1+\cdots+\lambda_m=1$ and each $\lambda_i \geqslant 0$. The \emph{linear (convex) hull} of $\{\bm{v}_1,\ldots,\bm{v}_m\}$ is the set of all linear (convex) combinations of $\bm{v}_1,\ldots,\bm{v}_m$. The \emph{perfect matching polytope $PM(G)$} of a graph $G$ is the convex hull of incidence vectors of all perfect matchings in $G$. Edmonds \cite{perfect_matching_polytope} gave fundamental results to describe the perfect matching polytope. 
    \begin{The}[\cite{perfect_matching_polytope}]
        \label{polytope}
        The perfect matching polytope $PM(G)$ may be described by the following constraints:
        \begin{enumerate*}
            \item[$(i)$] $\bm{x} \geqslant 0$
            \item[$(ii)$] $\bm{\partial}_G(v) \bm{x}=1$, for every vertex $v$ in $G$
            \item[$(iii)$] $(\bm{q}^C)^T \bm{x} \geqslant 1$, for every non-trivial odd cut $C$ of $G$.
        \end{enumerate*}
    \end{The}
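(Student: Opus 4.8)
The statement is Edmonds' characterization of $PM(G)$; the plan is to prove the two inclusions separately, writing $Q$ for the polyhedron defined by $(i)$--$(iii)$ and allowing $G$ to be a multigraph (this costs nothing and will be convenient under contraction). The inclusion $PM(G)\subseteq Q$ is immediate: for a perfect matching $M$ the vector $\bm{q}^M$ is $0$--$1$, meets each $\bm{\partial}_G(v)$ in exactly one edge, and meets every odd cut $\partial_G(X)$ in at least one edge, since the edges of $M$ lying inside $G[X]$ saturate an even subset of the odd set $X$; as $Q$ is convex this gives $PM(G)\subseteq Q$. Since $(i)$ and $(ii)$ force $0\le x_e\le 1$ on $Q$, the set $Q$ is a polytope, so for the reverse inclusion it suffices to show that every vertex $\bm{x}$ of $Q$ is the incidence vector of a perfect matching. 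I would prove this by induction on $|V(G)|+|E(G)|$.

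For the inductive step, first remove the ``integral'' coordinates of a vertex $\bm{x}$ of $Q$. If $x_e=0$ for some edge $e$, then the restriction of $\bm{x}$ to $E(G-e)$ is again a vertex of $Q(G-e)$ (any nontrivial convex split of it, extended by $0$ on $e$, would split $\bm{x}$), hence by induction a perfect matching of $G-e$ and so of $G$; if $x_e=1$ for some $e=uv$, then $(ii)$ forces $x_f=0$ on all other edges at $u$ or $v$, and one applies induction to $G-u-v$ and adds $e$ back. So we may assume $0<x_e<1$ for every $e$. Then every vertex of $G$ has degree at least $2$, so $|E(G)|\ge|V(G)|$. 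If equality holds, $G$ is a disjoint union of cycles along each of which the $x$-values alternate between $a$ and $1-a$; an odd cycle would have to be a component with all values $\tfrac12$, and since $|V(G)|$ is even such components come in an even number, so one of them is one side of a non-trivial odd cut with empty edge set, violating $(iii)$; hence all cycles are even and $\bm{x}$ is a proper convex combination of the two alternating perfect matchings of $G$, contradicting that $\bm{x}$ is a vertex. Therefore $|E(G)|>|V(G)|$.

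Being a vertex, $\bm{x}$ satisfies $|E(G)|$ linearly independent tight constraints; since $x_e>0$ none is of type $(i)$, and type $(ii)$ contributes rank at most $|V(G)|$, so some tight constraint $(\bm{q}^C)^T\bm{x}=1$ of type $(iii)$ must occur, say $C=\partial_G(X)$ with $X$ and $\bar X:=V(G)\setminus X$ both odd and both of size at least $3$. Let $G_1$ be $G$ with $\bar X$ contracted to a single vertex $x_2$, let $G_2$ be $G$ with $X$ contracted to a single vertex $x_1$, and let $\bm{x}^{(1)},\bm{x}^{(2)}$ be the restrictions of $\bm{x}$ to $E(G_1),E(G_2)$, noting that every edge of $\partial_G(X)$ survives (possibly becoming a parallel edge). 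Then $\bm{x}^{(i)}\in Q(G_i)$: constraint $(ii)$ at the contracted vertex is exactly the tightness $\sum_{e\in\partial_G(X)}x_e=1$, and every non-trivial odd cut of $G_i$ has the same edge set as a non-trivial odd cut of $G$ — the parities match because $|X|$ and $|\bar X|$ are odd — so $(iii)$ for $G$ yields $(iii)$ for $G_i$. As $|V(G_i)|<|V(G)|$, induction gives convex decompositions $\bm{x}^{(1)}=\sum_i\lambda_i\bm{q}^{N_i}$ and $\bm{x}^{(2)}=\sum_j\mu_j\bm{q}^{P_j}$ into perfect matchings of $G_1$ and of $G_2$, each using exactly one edge of $\partial_G(X)$.

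The last and, I expect, the hardest step is to splice these into a decomposition of $\bm{x}$. For each $f\in\partial_G(X)$, the matchings $N_i$ that contain $f$ have total weight $\sum_{i:f\in N_i}\lambda_i=x^{(1)}_f=x_f$, and likewise $\sum_{j:f\in P_j}\mu_j=x_f$; since these marginals on $\partial_G(X)$ coincide edge by edge, a transportation argument produces, for each $f$, coefficients $\nu_{ij}\ge0$ over pairs $(i,j)$ with $f\in N_i\cap P_j$ satisfying $\sum_j\nu_{ij}=\lambda_i$ and $\sum_i\nu_{ij}=\mu_j$. For such a pair put $M_{ij}=(N_i\cap E(G[X]))\cup\{f\}\cup(P_j\cap E(G[\bar X]))$, which is a perfect matching of $G$, and check coordinate by coordinate — separately for edges with both ends in $X$, both ends in $\bar X$, and in $\partial_G(X)$ — that $\sum\nu_{ij}\bm{q}^{M_{ij}}=\bm{x}$. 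Hence $\bm{x}\in PM(G)$, contradicting the choice of $\bm{x}$, so every vertex of $Q$ lies in $PM(G)$ and $Q=PM(G)$. The two points that need genuine care are the parity bookkeeping for odd cuts under contraction and this final splicing; the $0/1$-coordinate reductions are routine.
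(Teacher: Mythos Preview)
The paper does not give a proof of this statement; it is quoted from Edmonds' original paper (the citation \cite{perfect_matching_polytope}) and then used as a black box to derive the linear program (LP). There is therefore nothing in the paper to compare your argument against.

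For what it is worth, your outline is the standard proof of Edmonds' theorem and is correct in all essential points. One small imprecision: in the case $|E(G)|=|V(G)|$ you speak of ``the two alternating perfect matchings of $G$'', which is literally true only when $G$ is a single even cycle; when $G$ is a disjoint union of $k$ even cycles there are $2^k$ perfect matchings. The conclusion is unaffected, since with $0<x_e<1$ everywhere $\bm{x}$ is still a proper convex combination of perfect-matching vectors, all of which lie in $Q$, and hence $\bm{x}$ is not a vertex of $Q$. The contraction step (including the parity bookkeeping showing that every non-trivial odd cut of $G_i$ comes from one of $G$) and the transportation-based splicing at the end are fine as you describe them.
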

    Then for any graph $G$, we introduce a linear programming (LP) to compute $mp_f(G)$ without finding all perfect matchings of $G$. Let $\bm{b}$ be a vector in $\mathbb{R}^{E(G)}$. Then (LP) is defined as follows:

    \vspace{0.5\baselineskip}
    \noindent(LP):
    \vspace{-1.7\baselineskip}
    \begin{alignat}{2}
        \min \quad        & z                                &{}    & \notag \\
        \mbox{s.t.}\quad  & z-b_e \geqslant 0,               &{}    & \text{for every edge } e \in E(G) \notag \\
                          & \bm{\partial}_G(v) \bm{b}=1,     &\quad & \text{for every vertex } v \in V(G) \label{incident_constraint} \\
                          & (\bm{q}^C)^T \bm{b} \geqslant 1, &{}    & \text{for every non-trivial odd cut } C \text{ of } G \label{blossom_constraint} \\
                          & \bm{b} \geqslant 0.              &{}    & \label{non_negative_constraint}
    \end{alignat}
    Let $L(G)$ be the optimal objective value of (LP). Then we have following result.
    \begin{The}
        \label{mp_to_lp}
        Let $G$ be a graph with a perfect matching. Then $mp_f(G)=1/L(G)$. 
    \end{The}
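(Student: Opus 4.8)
The plan is to route the proof through the linear-programming dual of (FMP) together with Edmonds' description of the perfect matching polytope. First I would observe that in (LP), for any fixed feasible $\bm b$ the objective forces $z=\max_{e\in E(G)}b_e$, while the remaining constraints \eqref{incident_constraint}, \eqref{blossom_constraint} and \eqref{non_negative_constraint} are literally conditions $(ii)$, $(iii)$ and $(i)$ of Theorem \ref{polytope}. Hence the set of admissible $\bm b$ is exactly $PM(G)$, and
\[
L(G)=\min_{\bm b\in PM(G)}\ \max_{e\in E(G)}b_e .
\]
Since $G$ has a perfect matching, $PM(G)\neq\varnothing$ and is compact, so the minimum is attained; and since every $\bm b\in PM(G)$ satisfies $\bm 1^T\bm b=|V(G)|/2>0$ with $0\le b_e\le1$, one gets $0<L(G)\le1$, so that $1/L(G)$ is well defined.

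Next I would take the LP dual of (FMP). Writing (FMP) as $\min\{\bm 1^T\bm y:\ A\bm y\ge\bm 1,\ \bm y\ge 0\}$ where the rows of $A$ are the vectors $(\bm q^M)^T$ over $M\in\mathcal M(G)$, the dual is the fractional perfect-matching packing
\[
\max\ \sum_{M\in\mathcal M(G)}x_M\qquad\text{s.t.}\qquad \sum_{M\ni e}x_M\le 1\ \ (e\in E(G)),\quad x_M\ge 0 .
\]
Both programs are feasible ($\bm y=\bm 1$ for the primal; $x_{M_0}=1$ for a fixed $M_0\in\mathcal M(G)$, the rest $0$, for the dual) and the primal objective is bounded below by $0$, so strong duality applies and $mp_f(G)$ equals the optimum of this packing program.

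Finally I would establish the reciprocal identity between the packing optimum and $L(G)$. Given any packing-feasible $\bm x\neq\bm 0$ with value $\tau=\sum_M x_M>0$, the vector $\bm b:=\sum_M(x_M/\tau)\,\bm q^M$ is a convex combination of incidence vectors of perfect matchings, hence $\bm b\in PM(G)$, and $b_e=\tfrac1\tau\sum_{M\ni e}x_M\le\tfrac1\tau$ for every $e$; thus $L(G)\le 1/\tau$, i.e. $\tau\le 1/L(G)$. Conversely, choosing $\bm b\in PM(G)$ with $\max_e b_e=L(G)=:\ell$ and a representation $\bm b=\sum_i\lambda_i\bm q^{M_i}$ with $\lambda_i>0$, $\sum_i\lambda_i=1$ (finitely many terms, as there are finitely many perfect matchings), the assignment $x_{M_i}:=\lambda_i/\ell$ and $x_M:=0$ otherwise is packing-feasible since $\sum_{M\ni e}x_M=b_e/\ell\le1$, with value $\sum_i\lambda_i/\ell=1/L(G)$. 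Combining the two bounds shows the packing optimum equals $1/L(G)$, hence $mp_f(G)=1/L(G)$.

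The routine parts are the identification of the feasible set of (LP) with $PM(G)$ and the rescaling correspondence in the last step; the step I would be most careful about is invoking strong LP duality, for which I must certify feasibility of both the covering program (FMP) and its packing dual — which is precisely where the hypothesis that $G$ has a perfect matching enters — together with the check that $L(G)>0$ (and $mp_f(G)\ge 1$) so that passing to reciprocals is legitimate.
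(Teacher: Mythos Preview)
Your proposal is correct and follows essentially the same route as the paper: both arguments pass to the LP dual of (FMP), recognize via Theorem~\ref{polytope} that the normalized dual variables describe a point of $PM(G)$, and use the rescaling $\bm b\mapsto \bm b/\max_e b_e$ to identify the dual optimum with $1/L(G)$. The only cosmetic difference is that the paper carries out the normalization as a change of variables inside (DFMP) and then transforms the resulting program into (LP), whereas you first interpret (LP) as $\min_{\bm b\in PM(G)}\max_e b_e$ and then prove the reciprocal identity by two inequalities; your extra care in checking feasibility for strong duality and $L(G)>0$ is welcome but not a departure in method.
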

    \begin{proof}
        First we give the dual of (FMP) as follows:

        \vspace{0.5\baselineskip}
        \noindent (DFMP):
        \vspace{-1.7\baselineskip}
        \begin{alignat*}{2}
            \max \quad        & \bm{1}^T \bm{x}                                            &{}    & \\
            \mbox{s.t.}\quad  & \sum_{M \in \mathcal{M}(G)}x_M\bm{q}^M \leqslant \bm{1}    &\quad & \\
                              & x_M \geqslant 0,                                           &{}    & \text{for every } M \in \mathcal{M}(G).
        \end{alignat*}
        Let $w=\sum\limits_{M \in \mathcal{M}(G)} x_M$. Since $\bm{x}=\bm{0}$ is not an optimal solution of (DFMP), we suppose $w \neq 0$. Let $x'_M=x_M/w$ and $\bm{b}=\sum\limits_{M \in \mathcal{M}(G)}x'_M\bm{q}^M$. Then we rewrite (DFMP) as follows:
        \begin{alignat}{2}
            \max \quad        & w                                                  &{}    & \notag \\
            \mbox{s.t.}\quad  & w\bm{b} \leqslant 1                                &{}    & \notag \\
                              & \sum_{M \in \mathcal{M}(G)}x'_M\bm{q}^M=\bm{b}     &\quad & \label{binPMG1} \\
                              & \sum_{M \in \mathcal{M}(G)}x'_M=1                  &{}    & \label{binPMG2} \\
                              & x'_M \geqslant 0,                                  &{}    & \text{for every } M \in \mathcal{M}(G). \label{binPMG3}
        \end{alignat}
        By Constraints \eqref{binPMG1}--\eqref{binPMG3}, we have $\bm{b} \in PM(G)$. So by Theorem \ref{polytope}, we transform (DFMP) into the following form,
        \begin{alignat}{2}
            \max \quad        & w                                &{}    & \notag \\
            \mbox{s.t.}\quad  & w\bm{b} \leqslant 1              &{}    & \label{w1}\\
                              & \bm{\partial}_G(v) \bm{b}=1,     &{}    & \text{for every vertex } v \in V(G) \notag \\
                              & (\bm{q}^C)^T \bm{b} \geqslant 1, &{}    & \text{for every non-trivial odd cut } C \text{ of } G \notag \\
                              & \bm{b} \geqslant 0,              &\quad & \notag
        \end{alignat}
        Since $w$ is only bounded in Constraint \eqref{w1} which is equivalent to $w \cdot \max\{b_e \mid e \in E(G)\} \leqslant 1$, we only need to compute minimum value of $\max\{b_e \mid e \in E(G)\}$ under the rest three constraints. So it remains to consider following programming.
        \begin{alignat}{2}
            \min \quad        & z                                &{}    & \notag \\
            \mbox{s.t.}\quad  & z=\max\{b_e \mid e \in E(G)\}    &{}    & \label{z1} \\
                              & \bm{\partial}_G(v) \bm{b}=1,     &{}    & \text{for every vertex } v \in V(G) \notag \\
                              & (\bm{q}^C)^T \bm{b} \geqslant 1, &{}    & \text{for every non-trivial odd cut } C \text{ of } G \notag \\
                              & \bm{b} \geqslant 0.              &\quad & \notag \\ 
        \intertext{We can convert it into a linear programming by replacing the Constraint \eqref{z1} with}
                              & z-b_e \geqslant 0                &{}    & \text{for every edge } e \in E(G) \tag{\ref{z1}$'$} \label{z1'}.
        \end{alignat}
        Noting that the resulting linear programming is (LP), we have that $w\cdot L(G) \leqslant 1$. Thus, the optimal objective value of (DFMP) is $1/L(G)$, so $mp_f(G)=1/L(G)$.
    \end{proof}

    For any graph $G$, Theorem \ref{mp_to_lp} means we can compute $mp_f(G)$ by solving (LP), whose constraints are related to odd cuts of $G$, rather than perfect matchings of $G$. Since a polynomial algorithm for minimum odd cut was given in \cite{odd_cut}, 
    we can also solve the separation problem corresponding to (LP) in polynomial time, which implies that (LP) is also polynomially solvable by Theorem \ref{separation_optimization}.

\section{Bipartite Graph}
    \label{mp_bigraph}
    Noting that the odd cuts in $G$ are numerous, the constraints of (LP) may be very complex. But for bipartite graphs, we can obtain some better results. For a bipartite graph, Birkhoff \cite{bipartite_perfect_matching_polytope} described its perfect matching polytope.
    \begin{The}[\cite{bipartite_perfect_matching_polytope}]
        \label{bipartite_polytope}
        If $G$ is a bipartite graph, then the perfect matching polytope $PM(G)$ may be described by the following constraints:
        \begin{enumerate*}
            \item[$(i)$] $\bm{x} \geqslant 0$
            \item[$(ii)$] $\bm{\partial}_G(v) \bm{x}=1$, for every vertex $v$ in $G$
        \end{enumerate*}
    \end{The}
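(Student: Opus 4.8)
The plan is to prove the theorem by establishing the two inclusions between $PM(G)$ and the polyhedron $Q(G)$ defined by constraints $(i)$ and $(ii)$. The inclusion $PM(G) \subseteq Q(G)$ is immediate: for a perfect matching $M$ the incidence vector $\bm{q}^M$ is nonnegative, and $\bm{\partial}_G(v)\bm{q}^M = 1$ for every vertex $v$ because exactly one edge of $M$ is incident with $v$; since $Q(G)$ is convex it then contains every convex combination of such vectors, i.e.\ all of $PM(G)$. The content of the theorem is the reverse inclusion $Q(G) \subseteq PM(G)$. For this I would first note that $Q(G)$ is bounded: if $v$ is an end of an edge $e$, then $(i)$ and $(ii)$ give $0 \leqslant x_e \leqslant \bm{\partial}_G(v)\bm{x} = 1$, so $0 \leqslant x_e \leqslant 1$ for all $e \in E(G)$. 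Hence $Q(G)$ is a polytope and equals the convex hull of its vertices, and it suffices to show that every vertex of $Q(G)$ is the incidence vector of a perfect matching.

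To identify the vertices I would invoke total unimodularity. Writing $A$ and $B$ for the two colour classes of $G$, the vertex--edge incidence matrix $M_G$ is totally unimodular, which one verifies by induction on the order of a square submatrix $N$: if some column of $N$ has at most one $1$, expand the determinant along that column; otherwise every column of $N$ has exactly two $1$'s, one in an $A$-row and one in a $B$-row, so the sum of the $A$-rows of $N$ equals the sum of the $B$-rows of $N$ and $\det N = 0$. Consequently the system $M_G \bm{x} = \bm{1}$, $\bm{x} \geqslant 0$ defining $Q(G)$ has an integral right-hand side and a totally unimodular constraint matrix, so every vertex of $Q(G)$ is an integral vector (by Cramer's rule applied to the basis of columns defining the vertex, or equivalently by the Hoffman--Kruskal theorem). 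By the boundedness observation an integral point of $Q(G)$ has every $x_e \in \{0,1\}$, and then $\bm{\partial}_G(v)\bm{x} = 1$ says that exactly one edge $e$ with $x_e = 1$ meets each vertex $v$; thus $\{e : x_e = 1\}$ is a perfect matching and the vertex equals its incidence vector. Therefore every vertex of $Q(G)$ lies in $PM(G)$, and $Q(G) = PM(G)$.

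I expect the total unimodularity step to be the only real obstacle; the remaining parts are routine bookkeeping, and in the paper itself the statement, being classical (Birkhoff), is simply cited. A self-contained alternative that avoids total unimodularity, and stays closer to the polytope/flow viewpoint of the paper, is an uncrossing argument on even cycles: given $\bm{x} \in Q(G)$, let $H$ be the subgraph spanned by the edges with $0 < x_e < 1$; the equality $\bm{\partial}_G(v)\bm{x} = 1$ forces every vertex of $H$ to have $H$-degree at least $2$, so $H$ contains a cycle, necessarily even because $G$ is bipartite. Two-colouring the edges of this cycle alternately and perturbing $\bm{x}$ by $\pm\varepsilon$ along the two colour classes preserves all constraints $(i)$ and $(ii)$, and pushing $\varepsilon$ to its two extreme feasible values exhibits $\bm{x}$ as a convex combination of two points of $Q(G)$ each having strictly fewer fractional edges. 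Induction on the number of fractional edges then writes $\bm{x}$ as a convex combination of $0/1$ points of $Q(G)$, that is, of perfect matchings, giving $\bm{x} \in PM(G)$ directly.
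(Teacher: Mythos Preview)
Your proposal is correct, and both arguments you outline (total unimodularity of $M_G$ combined with integrality of polyhedral vertices, and the alternating-cycle uncrossing) are standard and complete proofs of Birkhoff's theorem. The paper itself, however, does not prove this statement at all: it is quoted as a classical result with a citation to Birkhoff, exactly as you anticipated. So there is no paper-proof to compare against; your write-up simply supplies what the paper takes for granted.
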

    Inspired by it, we construct a simpler linear programming (BLP) to find $L(G)$,

    \vspace{0.5\baselineskip}
    \noindent(BLP):
    \vspace{-1.7\baselineskip}
    \begin{alignat}{2}
        \min \quad        & z                                &{}    & \notag \\
        \mbox{s.t.}\quad  & z-b_e \geqslant 0,               &{}    & \text{for every edge } e \in E(G) \notag \\
                          & \bm{\partial}_G(v) \bm{b}=1,     &\quad & \text{for every vertex } v \in V(G) \label{BLP_incidence_constraint} \\
                          & \bm{b} \geqslant 0,              &{}    & \label{BLP_non_negetive_constraint}
    \end{alignat}
    that is, (LP) without Constraint $\eqref{blossom_constraint}$. 
    Then we can show the following lemma.
    \begin{Lem}
        \label{optimal of BLP}
        If $G$ is a bipartite graph with a perfect matching, then the optimal objective value (BLP) is $L(G)$.
    \end{Lem}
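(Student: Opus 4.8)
The plan is to prove the two inequalities
\[
\text{(optimal value of (BLP))} \leqslant L(G)
\quad\text{and}\quad
\text{(optimal value of (BLP))} \geqslant L(G)
\]
separately; the only real content is that for a bipartite graph the odd-cut constraint \eqref{blossom_constraint} is redundant in (LP).

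First I would handle the easy inequality. Since (BLP) is obtained from (LP) simply by deleting Constraint \eqref{blossom_constraint}, every feasible solution $(z,\bm{b})$ of (LP) is feasible for (BLP). Hence the feasible region of (BLP) contains that of (LP), and minimizing the same objective $z$ over a larger region yields that the optimal value of (BLP) is at most $L(G)$.

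For the reverse inequality I would show that, conversely, every feasible solution of (BLP) is feasible for (LP); then the two feasible regions coincide and so do the optimal values. Let $(z,\bm{b})$ be feasible for (BLP), so that $\bm{b}\geqslant \bm{0}$, $\bm{\partial}_G(v)\bm{b}=1$ for every $v\in V(G)$ (Constraints \eqref{BLP_incidence_constraint}, \eqref{BLP_non_negetive_constraint}), and $z-b_e\geqslant 0$ for every edge $e$. By Theorem \ref{bipartite_polytope} (Birkhoff), for a bipartite $G$ these are exactly the defining constraints of $PM(G)$, so $\bm{b}\in PM(G)$. But $PM(G)$ of an arbitrary graph is also described by the constraints of Theorem \ref{polytope}, and in particular every point of $PM(G)$ satisfies $(\bm{q}^C)^T\bm{b}\geqslant 1$ for each non-trivial odd cut $C$ of $G$; that is, $\bm{b}$ satisfies Constraint \eqref{blossom_constraint}. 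Together with the constraints it already satisfies, this makes $(z,\bm{b})$ feasible for (LP), so the optimal value of (BLP) is at least $L(G)$. Combining the two parts gives that the optimal value of (BLP) equals $L(G)$.

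There is essentially no hard step here: the whole point is to observe that Birkhoff's description of $PM(G)$ for bipartite graphs has no odd-cut inequalities, so (LP) and (BLP) have the same feasible set. If one prefers an argument that does not route through the general polytope theorem, one can instead note that a point of $PM(G)$ is a convex combination $\sum_M \lambda_M \bm{q}^M$ over perfect matchings $M$, and for any non-trivial odd cut $C=\partial_G(X)$ with $|X|$ odd each perfect matching $M$ meets $C$ in an odd (hence positive) number of edges, because twice the number of $M$-edges with both ends in $X$ plus $|M\cap C|$ equals $|X|$; thus $(\bm{q}^C)^T\bm{q}^M\geqslant 1$ for every $M$ and therefore $(\bm{q}^C)^T\bm{b}\geqslant 1$, recovering Constraint \eqref{blossom_constraint} directly.
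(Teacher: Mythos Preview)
Your proof is correct and follows essentially the same route as the paper: you use Birkhoff's Theorem \ref{bipartite_polytope} to identify the feasible $\bm{b}$ of (BLP) with $PM(G)$, then Edmonds' Theorem \ref{polytope} to conclude that the odd-cut constraints are automatically satisfied, so (BLP) and (LP) have the same feasible region and hence the same optimal value. The paper's proof is the same argument stated more tersely (it asserts the equivalence of feasible sets directly rather than splitting into two inequalities), and your optional direct parity argument for the odd-cut constraint is a harmless extra.
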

    \begin{proof}
        Since $G$ is bipartite, by Theorem \ref{bipartite_polytope}, $\bm{b}$ satisfies Constraints \eqref{BLP_incidence_constraint}--\eqref{BLP_non_negetive_constraint} in (BLP) if and only if $\bm{b} \in PM(G)$. On the other hand, by Theorem \ref{polytope}, $\bm{b} \in PM(G)$ if and only if $\bm{b}$ satisfies Constraints \eqref{incident_constraint}--\eqref{non_negative_constraint} in (LP). Thus, $(z,\bm{b})$ is a feasible solution of (BLP) if and only if $(z,\bm{b})$ is a feasible solution of (LP), so this lemma holds.
    \end{proof}
    \begin{Rem}
        \label{bipartite_algorithm}
        In section \ref{mp_general_graph}, we have shown that for any graph $G$, $mp_f(G)$ can be computed in polynomial time by equivalence of optimization and separation, but the resulting algorithm do not appear to be efficient in practice. Here, if we suppose $G$ is bipartite, then by Theorem \ref{mp_to_lp} and Lemma \ref{optimal of BLP} we can compute $mp_f(G)$ by solving (BLP). Since (BLP) have only $|V(G)|+|E(G)|$ constraints and $|E(G)|$ variables, it implies an efficient algorithm to compute $mp_f(G)$ for any bipartite graph $G$.
    \end{Rem}

    Furthermore, we obtain an explicit expression of $mp_f(G)$ for any bipartite graph $G$, which plays an important role in studying the connection between $mp_f(G)$ and the existence of $k$-factor of $G$. To achieve this, we need Max-Flow Min-Cut Theorem and Hall's Theorem in the following.
    \begin{The}[Max-Flow Min-Cut Theorem, \cite{max_flow_min_cut}]
        \label{maxmin}
        Given a digraph $D$ with source $s$ and sink $t$, and capacity $c$ on $E(D)$. Then the maximum value of any feasible $s-t$ flow equals the minimum capacity of any $s-t$ cut.
    \end{The}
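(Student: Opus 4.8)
The plan is to prove the two inequalities separately; this is the classical Ford--Fulkerson argument.

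For the easy direction (value of any feasible flow $\leqslant$ capacity of any cut), fix a feasible $s$-$t$ flow $f$ and an $s$-$t$ cut $C=\partial_D^+(X)$ with $s\in X$ and $t\in V(D)\backslash X$. Summing the conservation condition $x_f(v)=0$ over all $v\in X\backslash\{s\}$ and adding $x_f(s)$, the contributions of arcs with both ends in $X$ cancel, leaving
\[
x_f(s)=\sum_{\substack{u\in X,\ v\notin X}}f(u,v)-\sum_{\substack{u\notin X,\ v\in X}}f(u,v)\leqslant \sum_{\substack{u\in X,\ v\notin X}}c(u,v)=cap(C),
\]
where the inequality uses $0\leqslant f\leqslant c$. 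Hence $\max_f x_f(s)\leqslant \min_C cap(C)$.

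For the reverse direction I would use residual graphs. Given a feasible flow $f$, form the residual digraph $D_f$ on $V(D)$ that contains, for each arc $(u,v)\in E(D)$, a forward arc $(u,v)$ of residual capacity $c(u,v)-f(u,v)$ whenever this quantity is positive, and a backward arc $(v,u)$ of residual capacity $f(u,v)$ whenever $f(u,v)>0$. If $D_f$ has a directed $s$-$t$ path $P$, then pushing $\varepsilon>0$ units of flow along $P$ (the minimum residual capacity on $P$) --- adding $\varepsilon$ on forward arcs, subtracting $\varepsilon$ on backward arcs --- produces a feasible flow of value $x_f(s)+\varepsilon$. Therefore at a maximum flow $f^{*}$ there is no $s$-$t$ path in $D_{f^{*}}$. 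Let $X$ be the set of vertices reachable from $s$ in $D_{f^{*}}$; then $s\in X$ and $t\notin X$. If $(u,v)\in E(D)$ with $u\in X$, $v\notin X$, then no forward residual arc $(u,v)$ exists, so $f^{*}(u,v)=c(u,v)$; if $(u,v)\in E(D)$ with $u\notin X$, $v\in X$, then no backward residual arc $(v,u)$ exists, so $f^{*}(u,v)=0$. Substituting these equalities into the displayed identity applied to $X$ gives $x_{f^{*}}(s)=cap(\partial_D^+(X))$, so this flow value equals a cut capacity; combined with the easy direction, $f^{*}$ is a maximum flow, $\partial_D^+(X)$ a minimum cut, and the two optima coincide.

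The only genuine subtlety is existence/termination: iterating the augmentation halts for rational capacities but not obviously otherwise, so rather than running the algorithm I would simply invoke compactness --- the set of feasible $s$-$t$ flows is a nonempty compact polytope and $x_f(s)$ is continuous --- to obtain a maximizer $f^{*}$ directly, and then apply the residual-graph argument once. Everything else is the routine telescoping bookkeeping in the displayed identity.
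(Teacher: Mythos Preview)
Your argument is the classical Ford--Fulkerson proof and is correct, including the use of compactness to sidestep termination issues for real-valued capacities. Note, however, that the paper does not supply its own proof of this theorem at all: it is quoted as a known result with a citation to \cite{max_flow_min_cut}, so there is nothing in the paper to compare against beyond the original reference, whose proof your proposal faithfully reproduces.
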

    \begin{The}[Hall's theorem, \cite{hall}]
        \label{hallthe}
        Let $G$ be a bipartite graph with bipartition $(A,B)$. Then $G$ has a matching of $A$ into $B$ if and only if and $|N(S)| \geqslant |S|$ for all $S \subseteq A$.
    \end{The}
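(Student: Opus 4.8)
The plan is to obtain Hall's theorem (Theorem~\ref{hallthe}) as a corollary of the Max-Flow Min-Cut Theorem (Theorem~\ref{maxmin}); recording that theorem just above is exactly what this proof will use. The necessity of Hall's condition is the easy half and I would dispatch it in one line: if $M$ is a matching of $A$ into $B$, then for every $S \subseteq A$ the vertices matched by $M$ to the vertices of $S$ are $|S|$ pairwise distinct vertices lying in $N(S)$, so $|N(S)| \geqslant |S|$.

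For the converse I would pass to an auxiliary digraph $D$. Take $V(D) = V(G) \cup \{s,t\}$; add an arc $(s,a)$ of capacity $1$ for each $a \in A$, an arc $(b,t)$ of capacity $1$ for each $b \in B$, and for each edge $ab \in E(G)$ with $a \in A$, $b \in B$ an arc $(a,b)$ of capacity $|A|$ (any value $\geqslant |A|$ works). The bookkeeping fact to establish is that integral feasible $s$-$t$ flows of value $|A|$ in $D$ correspond to matchings of $G$ saturating $A$: in such a flow the unit capacities force $f(s,a) = 1$ for every $a \in A$; conservation at $a$ then forces exactly one out-arc $(a,b)$ to carry flow $1$ and the rest flow $0$; and conservation together with the unit capacity at each $b \in B$ forces the chosen endpoints to be pairwise distinct, so the arcs from $A$ to $B$ carrying flow form a matching of $A$ into $B$, while conversely each such matching yields such a flow. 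Since the value of any $s$-$t$ flow is at most $\sum_{a \in A} c(s,a) = |A|$, it remains to show the maximum value is exactly $|A|$; by Theorem~\ref{maxmin} together with the integrality of an optimal flow (all capacities are integers) it suffices to prove that every $s$-$t$ cut has capacity at least $|A|$.

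So I would take an arbitrary $s$-$t$ cut $C = \partial_D^+(X)$ with $s \in X$, $t \notin X$, put $S = A \cap X$, and split into cases. If some arc $(a,b)$ with $a \in S$ and $b \notin X$ lies in $C$, then $cap(C) \geqslant |A|$ at once. Otherwise $N(S) \subseteq B \cap X$, so $C$ contains the arc $(b,t)$ for every $b \in N(S)$ and the arc $(s,a)$ for every $a \in A \setminus S$; hence $cap(C) \geqslant |A \setminus S| + |N(S)| \geqslant |A \setminus S| + |S| = |A|$, where the middle inequality is precisely Hall's condition applied to $S$. Thus the minimum cut has capacity $\geqslant |A|$, so the maximum flow equals $|A|$, and decoding an integral maximum flow gives the required matching. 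The only real obstacle is making these two pieces airtight — namely checking the flow-to-matching correspondence carefully and correctly enumerating which arcs of $D$ a given set $X$ separates, including remembering to invoke integrality of the maximum flow so that the optimal flow genuinely decodes to a matching.
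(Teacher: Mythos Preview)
The paper does not give its own proof of this statement: Hall's theorem is simply quoted from \cite{hall} as a classical tool and then applied in the proofs of Theorem~\ref{mpstar} and elsewhere. So there is nothing in the paper to compare your argument against.

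That said, your proof is correct and is one of the standard derivations of Hall's theorem. The construction of the auxiliary digraph, the bijection between integral $s$--$t$ flows of value $|A|$ and matchings saturating $A$, and the cut analysis are all sound. The one point worth flagging is that Theorem~\ref{maxmin} as stated in the paper asserts only the equality of max-flow and min-cut values, not the existence of an \emph{integral} maximum flow when capacities are integral; you correctly note that you need this integrality fact to decode the optimal flow as a matching, but strictly speaking it is an additional ingredient beyond what the paper records. Since the paper treats Hall's theorem as a black box anyway, this is a non-issue in context.
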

    \begin{The}
        \label{mpstar}
        Let $G=G(A,B)$ be a bipartite graph with $|A| \leqslant |B|$. Then
        \begin{equation*}
            mp_f(G)=\min \left\{ \left. \frac{e(X,Y)}{|X|+|Y|-|A|} \right| X \subseteq A, Y \subseteq B, |X|+|Y|-|A|>0 \right\}.
        \end{equation*}
        Furthermore, for every $X \subseteq A$ and $Y \subseteq B$ such that $mp_f(G)=\frac{e(X,Y)}{|X|+|Y|-|A|}$, $\widetilde{\bm{y}}$ with
        \begin{equation*}
            \widetilde{y}_e=\left\{
            \begin{aligned}
                & \frac{1}{|X|+|Y|-|A|}  & &\text{if } e \in E(X,Y), \\
                & 0                      & &\text{otherwise.}
            \end{aligned}
            \right.
        \end{equation*}
        is an optimal solution of (FMP).
    \end{The}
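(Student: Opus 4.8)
The plan is to reduce the computation of $mp_f(G)$, via Theorem~\ref{mp_to_lp} and Lemma~\ref{optimal of BLP}, to a single application of the Max-Flow Min-Cut Theorem, and then to verify the claimed optimal solution of (FMP) by a short counting argument.

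I would first dispose of the degenerate case in which $G$ has no perfect matching. Then $\mathcal{M}(G)=\emptyset$, so (FMP) has no constraint of type~\eqref{FMP1} and its optimal value is $0$; on the other hand, by Hall's theorem (Theorem~\ref{hallthe}) there is a pair $X\subseteq A$, $Y\subseteq B$ with $e(X,Y)=0$ and $|X|+|Y|-|A|>0$ --- namely $X=\emptyset$, $Y=B$ if $|A|<|B|$, and $X=S$, $Y=B\setminus N(S)$ for a set $S$ with $|N(S)|<|S|$ if $|A|=|B|$ --- so the right-hand side is also $0$ and $\widetilde{\bm y}=\bm 0$ is trivially optimal for (FMP). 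So from now on I assume $G$ has a perfect matching, hence $|A|=|B|=:n$, and then by Theorem~\ref{mp_to_lp} and Lemma~\ref{optimal of BLP} we have $mp_f(G)=1/L(G)$, where $L(G)$ is the optimal value of (BLP); unwinding (BLP), $L(G)$ is the least $z$ for which there is a vector $\bm b\geqslant 0$ with $b_e\leqslant z$ for every $e\in E(G)$ and $\bm{\partial}_G(v)\bm b=1$ for every $v\in V(G)$, i.e. a ``bottleneck'' fractional perfect matching.

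For a fixed $z>0$ I would test the existence of such a $\bm b$ by the flow network $D_z$ on the vertex set $\{s,t\}\cup A\cup B$ with an arc $s\to u$ of capacity $1$ for each $u\in A$, an arc $v\to t$ of capacity $1$ for each $v\in B$, and an arc $u\to v$ of capacity $z$ for each $uv\in E(G)$. A feasible $s$--$t$ flow of value $n$ must saturate every arc out of $s$ and every arc into $t$, so its restriction to the middle arcs is precisely such a $\bm b$, and conversely; hence a bottleneck fractional perfect matching with bound $z$ exists if and only if the maximum flow value of $D_z$ is $n$, equivalently (by Theorem~\ref{maxmin}) if and only if every $s$--$t$ cut of $D_z$ has capacity at least $n$. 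Every $s$--$t$ cut is $\partial_{D_z}^+(\{s\}\cup X\cup Y')$ for some $X\subseteq A$, $Y'\subseteq B$, and its capacity is $(n-|X|)+|Y'|+z\,e(X,B\setminus Y')$, which equals $2n-|X|-|Y|+z\,e(X,Y)$ after writing $Y=B\setminus Y'$. Thus the min-cut condition is precisely $z\,e(X,Y)\geqslant |X|+|Y|-n$ for all $X\subseteq A$, $Y\subseteq B$; since $G$ has a perfect matching we also have $e(X,Y)>0$ whenever $|X|+|Y|>n$ (otherwise $N(X)\subseteq B\setminus Y$, contradicting Hall), so $L(G)$ is the maximum of $(|X|+|Y|-n)/e(X,Y)$ over all such pairs, and taking reciprocals (using $|A|=n$) gives the asserted formula for $mp_f(G)$.

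For the second assertion, the objective value of $\widetilde{\bm y}$ is $\bm 1^T\widetilde{\bm y}=e(X,Y)\cdot\frac{1}{|X|+|Y|-|A|}=mp_f(G)$, so it remains to check feasibility for (FMP). Nonnegativity is clear, and for every $M\in\mathcal{M}(G)$ one has $(\bm q^M)^T\widetilde{\bm y}=|M\cap E(X,Y)|\,/\,(|X|+|Y|-|A|)$, so I need $|M\cap E(X,Y)|\geqslant|X|+|Y|-|A|$. Putting $k:=|M\cap E(X,Y)|$, exactly $k$ vertices of $Y$ are matched by $M$ to partners in $X$, so the other $|Y|-k$ vertices of $Y$ are matched to distinct vertices of $A\setminus X$, which forces $|Y|-k\leqslant|A|-|X|$, i.e. $k\geqslant|X|+|Y|-|A|$; here the hypothesis $|A|\leqslant|B|$ is essential, since the symmetric count over $B\setminus Y$ only gives the weaker bound $k\geqslant|X|+|Y|-|B|$. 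Hence $(\bm q^M)^T\widetilde{\bm y}\geqslant 1$, so $\widetilde{\bm y}$ is an optimal solution of (FMP). The only genuine obstacle in this plan is the middle step: recognizing the bottleneck fractional perfect matching problem as a maximum flow problem and checking that the resulting minimum-cut expression collapses to exactly $|X|+|Y|-n$; once the network $D_z$ is set up correctly, both this computation and the verification of $\widetilde{\bm y}$ are routine.
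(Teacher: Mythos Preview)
Your proposal is correct and follows essentially the same route as the paper: dispose of the no-perfect-matching case via Hall, reduce to (BLP) through Theorem~\ref{mp_to_lp} and Lemma~\ref{optimal of BLP}, encode (BLP) for fixed $z$ as a max-flow problem on the standard bipartite network with middle capacities $z$, read off the min-cut capacity as $2n-|X|-|Y|+z\,e(X,Y)$, use Hall again to rule out $e(X,Y)=0$ when $|X|+|Y|>n$, and finish with a counting argument for the feasibility of $\widetilde{\bm y}$. The only cosmetic differences are that the paper exhibits a different Hall witness in the degenerate case (it takes $Y_0\subseteq B$ with $|Y_0|>|N(Y_0)|$ and sets $X_0=A\setminus N(Y_0)$) and that the paper counts $M$-edges at $X$ rather than at $Y$ to get $|M\cap E(X,Y)|\geqslant |X|+|Y|-|A|$; both variants are equivalent here since $|A|=|B|$ in the perfect-matching case, so your aside about $|A|\leqslant|B|$ being ``essential'' for this count is not actually needed.
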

    \begin{proof}
        First we suppose $G$ has no perfect matching. Then (FMP) has only Constraint \eqref{FMP2}, so we have $mp_f(G)=0$. On the other hand, by Theorem \ref{hallthe}, there exists $Y_0 \subseteq B$ with $|Y_0| > |N(Y_0)|$. Let $X_0=A \backslash N(Y_0)$. Then $e(X_0,Y_0)=0$ and $|X_0|+|Y_0|-|A|=|Y_0|-|N(Y_0)|>0$, so $\min \left\{ \left. \frac{e(X,Y)}{|X|+|Y|-|A|} \right| X \subseteq A, Y \subseteq B, |X|+|Y|-|A|>0 \right\}=0$. Furthermore, for every $X \subseteq A$ and $Y \subseteq B$ such that $\frac{e(X,Y)}{|X|+|Y|-|A|}=0$, we have $E(X,Y)=\emptyset$. So $\widetilde{\bm{y}}=\bm{0}$ is an optimal solution of (FMP).

        Next we suppose $G$ has a perfect matching. Then $|A|=|B|$. By Theorem \ref{mp_to_lp} and Lemma \ref{optimal of BLP}, we only need to solve (BLP). Now we construct a network flow to determine the optimal objective value of (BLP). Let $D$ be a digraph with $V(D)=V(G) \cup \{s,t\}$ and $E(D)=\{(s,a) \mid a \in A\} \cup \{(b,t) \mid b \in B\} \cup \{(a,b) \mid a \in A, b \in B, ab \in E(G)\}$, where $s$ and $t$ are the source and sink, respectively. We assign capacities to all arcs of $D$ as follows:
        \begin{equation*}
            c(u,v)=\left\{
            \begin{aligned}
                &1 , &\quad & \text{if $u=s,v \in A$ or $u \in B,v=t$,} \\
                &z , &{}    & \text{if $u \in A$ and $v \in B$.} \\
            \end{aligned}
            \right.
        \end{equation*}
        We claim that $(z,\bm{b})$ is a feasible solution of (BLP) if and only if $D$ has a feasible $s-t$ flow with value $|A|$.

        If $D$ has a feasible $s-t$ flow $f$ with value $|A|$, then $f(s,u)=f(v,t)=1$ for every $u \in A$ and $v \in B$. Let $b_{uv}=f(u,v) \geqslant 0$ for every edge $uv \in E(G)$. Then for every vertex $u \in A$, we have
        \begin{equation*}
            \bm{\partial}_G(u)\bm{b}=\sum\limits_{w \in N_G(u)} b_{uw}=\sum\limits_{w \in N^+_D(u)} f(u,w)=f(s,u)=1.
        \end{equation*}
        Similarly, every vertex $v \in B$ satisfies $\bm{\partial}_G(v)\bm{b}=1$. Moreover, by the capacity constraint, we can verify that $(z,\bm{b})$ satisfies the rest constraints of (BLP). So $(z,\bm{b})$ is a feasible solution of (BLP). 
        Conversely, if $(z,\bm{b})$ satisfies the constraints of (BLP), then we define a function $f$ on $E(D)$ as follows:
        \begin{equation*}
            f(u,v)=\left\{
            \begin{aligned}
                &1,      &\quad & \mbox{if $u=s,v \in A$ or $u \in B,v=t$,} \\
                &b_{uv}, &{}    & \mbox{if $u \in A$ and $v \in B$.} \\
            \end{aligned}
            \right.
        \end{equation*}
        Clearly, $f$ is a feasible $s-t$ flow with value $|A|$. Thus, our claim holds.

        Noting that $\partial^+_D(s)$ is an $s-t$ cut with value $|A|$, $(z,\bm{b})$ is a feasible solution of (BLP) if and only if every $s-t$ cut $C=\partial^+_D(R)$ of $D$ satisfies $cap(C) \geqslant |A|$ by Theorem \ref{maxmin}. We set $X=R \cap A$ and $Y=B \backslash R$. So we have $cap(C)=|A|-|X|+ze(X,Y)+|B|-|Y|$. If $e(X,Y)=0$, then $N_G(X) \subseteq B \backslash Y$. Since $G$ has a perfect matching, by Theorem \ref{hallthe} we have $|X| \leqslant |N_G(X)|$. Thus, $cap(C)=|A|+(|B|-|Y|-|X|)=|A|+(|B \backslash Y|-|X|) \geqslant |A|+(|N_G(X)|-|X|) \geqslant |A|$. If $e(X,Y)>0$, then $cap(C) \geqslant |A|$ if and only if $z \geqslant \frac{|X|-(|B|-|Y|)}{e(X,Y)}$. So $(z,\bm{b})$ is a feasible solution of (BLP) if and only if $z \geqslant \frac{|X|-(|B|-|Y|)}{e(X,Y)}$ for every $X \subseteq A$, $Y \subseteq B$ and $e(X,Y) \geqslant 0$. Thus, we have
        \begin{equation*}
            L(G)=\max \left\{ \left. \frac{|X|-(|B|-|Y|)}{e(X,Y)} \right| X \subseteq A, Y \subseteq B, e(X,Y)>0 \right\} >0,
        \end{equation*}
        and
        \begin{equation*}
            mp_f(G)=\min \left\{ \left. \frac{e(X,Y)}{|X|+|Y|-|A|} \right| X \subseteq A, Y \subseteq B, e(X,Y)>0, |X|+|Y|-|A|>0 \right\}.
        \end{equation*}
        If $|X|+|Y|-|A| > 0$, then $|X| > |B \backslash Y|$. Since $G$ has a perfect matching, we have $|N(X)| \geqslant |X| > |B \backslash Y|$ by Theorem \ref{hallthe}. Thus, we have $N(X) \cap Y \neq \emptyset$, which means $e(X,Y) > 0$. Then we can remove the constraint $e(X,Y)>0$ in last formula and obtain the result we need.

        Let $X \subseteq A$ and $Y \subseteq B$ be two vertex sets such that $mp_f(G)=\frac{e(X,Y)}{|X|+|Y|-|A|}$. For every $M \in \mathcal{M}(G)$, we have $|M \cap E(X,B \backslash Y)|+|M \cap E(A \backslash X,B \backslash Y)|=|B \backslash Y|$ and $|M \cap E(X,Y)|+|M \cap E(X,B \backslash Y)|=|X|$, which means $|M \cap E(X,Y)|=|X|-(|B \backslash Y|-|M \cap E(A \backslash X,B \backslash Y)|) \geqslant |X|-|B \backslash Y| = |X|+|Y|-|A|$. So we have $(\bm{q}^M)^T \widetilde{\bm{y}}=\frac{|M \cap E(X,Y)|}{|X|+|Y|-|A|} \geqslant 1$, then $\widetilde{\bm{y}}$ is a feasible solution of (FMP). Furthermore, noting that $\bm{1}^T \widetilde{\bm{y}}=mp_f(G)$, $\widetilde{\bm{y}}$ is an optimal solution of (FMP). Thus, this theorem holds.
    \end{proof}

    We observe that for a bipartite graph $G$, $mp_f(G)$ is closely related to existence of $k$-factor. Let $f$ be a non-negative integer-valued function defined on $V(G)$. An $f$-factor is a spanning subgraph $G'$ of $G$ such that $d_{G'}(v)=f(v)$ for all $v \in V(G)$. Furthermore, if $f(v)=k$ for all $v \in V(G)$, we say $G'$ is a $k$-factor. The following result obtained by Ore \cite{r_factor1}, and Folkman and Fulkerson \cite{r_factor2} gave the criterion for a bipartite graph to have an $f$-factor.
    \begin{The}[\cite{r_factor1,r_factor2}]
        \label{existence_r_factor}
        Let $G=G(A,B)$ be a bipartite graph and let $f$ be a non-negative integer-valued function on $V(G)$. Then $G$ has an $f$-factor if and only if 
        \begin{enumerate*}
            \item[$(i)$] $\sum\limits_{u \in A}f(u)=\sum\limits_{v \in B}f(v)$ and
            \item[$(ii)$] for all $X \subseteq A$ and $Y \subseteq B$, we have $\sum\limits_{x \in X}f(x)\leqslant e(X,Y)+\sum\limits_{y \in B \backslash Y}f(y)$.
        \end{enumerate*} 
    \end{The}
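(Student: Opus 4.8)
The plan is to derive Theorem~\ref{existence_r_factor} from the Max-Flow Min-Cut Theorem (Theorem~\ref{maxmin}) together with integrality of maximum flows, mirroring the network-flow argument used in the proof of Theorem~\ref{mpstar}. First I would build an auxiliary digraph $D$ with $V(D)=V(G)\cup\{s,t\}$ and arc set $\{(s,a)\mid a\in A\}\cup\{(b,t)\mid b\in B\}\cup\{(a,b)\mid a\in A,\ b\in B,\ ab\in E(G)\}$, assigning capacities $c(s,a)=f(a)$ for $a\in A$, $c(b,t)=f(b)$ for $b\in B$, and $c(a,b)=1$ for each edge $ab\in E(G)$. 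The bridge between the two problems is the claim that $G$ has an $f$-factor if and only if $D$ has an integral feasible $s$-$t$ flow of value $\sum_{a\in A}f(a)$. Given such a flow $\varphi$, saturation of the arcs out of $s$ forces $\varphi(s,a)=f(a)$ for every $a\in A$, hence $\sum_{b\in N_G(a)}\varphi(a,b)=f(a)$; the flow into $t$ then totals $\sum_{a\in A}f(a)=\sum_{b\in B}f(b)$ by hypothesis $(i)$, and since $\varphi(b,t)\leqslant f(b)$ for each $b$ this forces $\varphi(b,t)=f(b)$ for every $b\in B$; as $\varphi$ is integral and $c(a,b)=1$, the set $G'=\{ab\in E(G)\mid \varphi(a,b)=1\}$ is then a spanning subgraph with $d_{G'}(v)=f(v)$ for all $v\in V(G)$. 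Conversely, an $f$-factor $G'$ produces such a flow by sending one unit along each edge of $G'$ and $f(a)$, $f(b)$ units along the arcs $(s,a)$, $(b,t)$. Since all capacities are integers, existence of an \emph{integral} flow of this value is equivalent, by the integrality form of Theorem~\ref{maxmin}, to the maximum $s$-$t$ flow value being exactly $\sum_{a\in A}f(a)$.

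The second step is to translate this maximum-flow condition into condition $(ii)$. The cut $\partial^+_D(s)$ already has capacity $\sum_{a\in A}f(a)$, so by Theorem~\ref{maxmin} the maximum flow attains this value if and only if every $s$-$t$ cut $C=\partial^+_D(R)$ with $s\in R$, $t\notin R$, satisfies $cap(C)\geqslant\sum_{a\in A}f(a)$. Writing $X=A\cap R$ and $Y=B\cap R$, a direct count gives
\[
cap(C)=\sum_{a\in A\backslash X}f(a)+\sum_{b\in Y}f(b)+e(X,B\backslash Y),
\]
so $cap(C)\geqslant\sum_{a\in A}f(a)$ is equivalent to $\sum_{x\in X}f(x)\leqslant e(X,B\backslash Y)+\sum_{b\in Y}f(b)$. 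Replacing $Y$ by $B\backslash Y$ and letting $X$, $Y$ range over all subsets of $A$, $B$, this family of inequalities is exactly $(ii)$. Combining the two steps, $(ii)$ holds if and only if the maximum flow equals $\sum_{a\in A}f(a)$, which by the first step is equivalent (using $(i)$) to $G$ having an $f$-factor; and $(i)$ is itself necessary for an $f$-factor, since then $\sum_{a\in A}f(a)=|E(G')|=\sum_{b\in B}f(b)$. This gives the stated equivalence.

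I expect no genuine obstacle here, only two points demanding care. The first is making the flow-to-factor correspondence exact on \emph{both} sides of the bipartition simultaneously: this is precisely where hypothesis $(i)$ enters, upgrading the coordinatewise bound $\varphi(b,t)\leqslant f(b)$ to equality, and it must be invoked before one may claim that $G'$ is an $f$-factor rather than merely a subgraph with $d_{G'}(a)=f(a)$ for all $a\in A$. The second is the reliance on integrality of maximum flows; if one prefers to sidestep it, necessity of $(ii)$ can instead be checked directly by counting the $\sum_{x\in X}f(x)$ edges of an $f$-factor incident with $X$ and splitting them according to whether the other endpoint lies in $Y$ or in $B\backslash Y$, while sufficiency still proceeds through the cut analysis above. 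Either way the argument runs closely parallel to the network-flow reasoning already carried out for Theorem~\ref{mpstar}.
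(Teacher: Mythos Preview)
The paper does not supply its own proof of this theorem: it is stated as a cited result of Ore and of Folkman--Fulkerson and then applied without argument. Your flow-based proof is correct and is in fact the standard derivation; it also fits naturally with the surrounding material, since the auxiliary digraph and the cut computation you use are exactly the template the paper exploits in its proof of Theorem~\ref{mpstar}, only with capacities $f(a)$, $f(b)$, $1$ in place of $1$, $1$, $z$. The two points you flag for care---invoking $(i)$ to force $\varphi(b,t)=f(b)$ on the $B$-side, and the appeal to integrality of maximum flows---are handled correctly in your sketch, so there is no gap.
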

    So it is easy to see that a bipartite graph $G=G(A,B)$ has a $k$-factor if and only if $|A|=|B|$, and $k|X| \leqslant e(X,Y)+k(|B|-|Y|)$ for all $X \subseteq A$ and $Y \subseteq B$. Then we have the following corollary by Theorem \ref{mpstar}.
    \begin{Cor}
        \label{mpstar_k_factor}
        Let $G$ be a bipartite graph. If $k$ is the maximum integer such that $G$ has $k$-factor, then $k=\lfloor mp_f(G) \rfloor$.
    \end{Cor}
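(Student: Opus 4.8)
The plan is to combine the explicit formula for $mp_f(G)$ in Theorem \ref{mpstar} with the Ore--Folkman--Fulkerson criterion, in the form recalled just after Theorem \ref{existence_r_factor}: a bipartite graph $G=G(A,B)$ has a $k$-factor if and only if $|A|=|B|$ and $k|X|\leqslant e(X,Y)+k(|B|-|Y|)$ for all $X\subseteq A$, $Y\subseteq B$. Write $G=G(A,B)$ with $|A|\leqslant|B|$. First I would dispose of the degenerate case in which $G$ has no perfect matching: then $mp_f(G)=0$ by Theorem \ref{mpstar}, whereas a $k$-factor with $k\geqslant1$ would be a $k$-regular bipartite spanning subgraph of $G$ and hence contain a perfect matching of $G$, a contradiction; so the largest $k$ with a $k$-factor is $0=\lfloor mp_f(G)\rfloor$. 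From now on assume $G$ has a perfect matching, so $|A|=|B|$.

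In this case the criterion above says that $G$ has a $k$-factor if and only if $k\bigl(|X|+|Y|-|A|\bigr)\leqslant e(X,Y)$ for all $X\subseteq A$ and $Y\subseteq B$. The key observation is that this family of inequalities collapses to a single comparison. If $|X|+|Y|-|A|\leqslant0$ the inequality holds automatically, because $k\geqslant0$ and $e(X,Y)\geqslant0$. If $|X|+|Y|-|A|>0$ it is equivalent to $k\leqslant\frac{e(X,Y)}{|X|+|Y|-|A|}$. Taking the infimum of the right-hand side over all pairs $X\subseteq A$, $Y\subseteq B$ with $|X|+|Y|-|A|>0$ and invoking Theorem \ref{mpstar}, we conclude that $G$ has a $k$-factor if and only if $k\leqslant mp_f(G)$.

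Finally I would finish by observing that $mp_f(G)\geqslant0$, so the set of non-negative integers $k$ for which $G$ has a $k$-factor is exactly $\{0,1,\dots,\lfloor mp_f(G)\rfloor\}$, whose maximum is $\lfloor mp_f(G)\rfloor$; this is the claim. There is no real obstacle here: the corollary is a direct translation between the two characterizations. The only points needing a little care are the case with no perfect matching (handled above via $mp_f(G)=0$) and checking that the inequalities with $|X|+|Y|-|A|\leqslant0$ impose no constraint, which is exactly where non-negativity of $k$ and of $e(X,Y)$ is used.
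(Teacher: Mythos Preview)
Your proposal is correct and follows exactly the route the paper intends: the paper states the corollary immediately after specializing Theorem~\ref{existence_r_factor} to $k$-factors and simply says it ``follows by Theorem~\ref{mpstar}'', leaving the reader to combine the two characterizations as you have done. Your write-up merely fills in the details (the no-perfect-matching case and the observation that pairs with $|X|+|Y|-|A|\leqslant 0$ impose no constraint) that the paper omits.
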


    \begin{Rem}
        Given a graph $G$ and a non-negative integer-valued function $f$ defined on $V(G)$, R. Anstee in \cite{f_factor_algorithm} introduced an algorithm to find an $f$-factor or show that none exists in polynomial time. Here, if $G$ is bipartite and $k$ is non-negative integer, then by Corollary \ref{mpstar_k_factor} and Remark \ref{bipartite_algorithm}, we can determine whether a bipartite graph has a $k$-factor by solving (BLP), which implies a new method to check whether a bipartite graph has a $k$-factor in polynomial time.
    \end{Rem}

    By Corollary \ref{mpstar_k_factor} we can find some classes of graphs with same matching preclusion number and fractional matching preclusion number, such as trees with an even number of vertices and regular bipartite graphs. 
    \begin{Cor}
        Let $T$ be a tree with an even number of vertices. Then $mp_f(T)=mp(T)$.
    \end{Cor}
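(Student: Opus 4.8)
The plan is to reduce to Corollary~\ref{mpstar_k_factor} after separating off a trivial case, and then to close the remaining gap with the elementary fact that every tree has a leaf.

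First I would handle the case in which $T$ has no perfect matching: then the empty set is already a matching preclusion set, so $mp(T)=0$, and since $0\leqslant mp_f(T)\leqslant mp(T)$ we get $mp_f(T)=mp(T)=0$. (This also follows from the first paragraph of the proof of Theorem~\ref{mpstar}.) So from now on assume $T$ has a perfect matching. A tree is bipartite, so Corollary~\ref{mpstar_k_factor} applies and $\lfloor mp_f(T)\rfloor$ equals the largest integer $k$ for which $T$ admits a $k$-factor. Having a perfect matching, $T$ has a $1$-factor, so $k\geqslant 1$; and since a tree on at least two vertices has a vertex of degree $1$, it has no $2$-factor, hence $k=1$. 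Thus $\lfloor mp_f(T)\rfloor=1$, and in particular $mp_f(T)\geqslant 1$.

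It remains to show $mp(T)\leqslant 1$. Choose a leaf $u$ of $T$ with (unique) neighbour $v$: in every perfect matching of $T$ the vertex $u$ must be matched along the edge $uv$, so $T-uv$ has no perfect matching and $\{uv\}$ is a matching preclusion set. Therefore $mp(T)\leqslant 1$, and combined with $1\leqslant mp_f(T)\leqslant mp(T)$ this yields $mp_f(T)=mp(T)=1$, as desired.

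No step here is genuinely hard; the only points requiring care are remembering the degenerate case in which $T$ has no perfect matching, and noticing that the single structural feature \emph{a tree has a leaf} does double duty: it rules out a $2$-factor (which is what pins $\lfloor mp_f(T)\rfloor$ down to $1$ via Corollary~\ref{mpstar_k_factor}), and it simultaneously exhibits one edge whose deletion destroys every perfect matching (which pins $mp(T)$ down to $1$).
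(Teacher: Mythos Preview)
Your proof is correct and follows essentially the same route as the paper's own argument: handle the no-perfect-matching case first, then apply Corollary~\ref{mpstar_k_factor} to get $mp_f(T)\geqslant 1$, and use a leaf to exhibit a one-edge matching preclusion set giving $mp(T)\leqslant 1$. Your write-up is slightly more explicit (you spell out why no $2$-factor exists and why the leaf edge is a preclusion set), but the structure and key ideas are identical.
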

    \begin{proof}
        If $T$ has no perfect matching, then $mp_f(T)=mp(T)=0$. Next we suppose $T$ has a perfect matching. So we have $\lfloor mp_f(T) \rfloor \geqslant 1$ by Corollary \ref{mpstar_k_factor}. Let $v$ be a vertex in $T$ with $d(v)=1$. Noting that $\partial(v)$ is a matching preclusion set of $T$, we have $mp(T) \leqslant |\partial(v)| =1$. Thus, $mp_f(T)=mp(T)=1$.
    \end{proof}

    By Hall's theorem, the following result can be easily shown and next corollary holds immediately by Corollary \ref{mpstar_k_factor}.
    \begin{The}[\cite{bipartite1}]
        \label{regular_bipartite}
        Let $G$ be an $r$-regular bipartite graph. Then the edges of $G$ can be partitioned into $r$ perfect matchings and $mp(G)=r$.
    \end{The}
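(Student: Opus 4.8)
The plan is to split the statement into its two assertions and handle them in turn. For the edge decomposition I would argue by induction on $r$. The base case $r=1$ is trivial, since a $1$-regular graph \emph{is} a perfect matching. For the inductive step it suffices to show that every $r$-regular bipartite graph with $r\geqslant 1$ has a perfect matching $M$; then $G-M$ is $(r-1)$-regular bipartite, and the induction hypothesis applied to $G-M$ completes the decomposition $E(G)=M_1\cup\cdots\cup M_r$ into $r$ pairwise edge-disjoint perfect matchings.

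To produce $M$, let $(A,B)$ be the bipartition of $G$. First I would note $|A|=|B|$: counting edges from each side gives $r|A|=|E(G)|=r|B|$, hence $|A|=|B|$ because $r\geqslant 1$ (in particular $|V(G)|$ is even, so $mp(G)$ is well defined). Next I would verify Hall's condition for $A$ by a double-counting argument: for any $S\subseteq A$, every edge incident with $S$ has its other end in $N(S)$, so $e(S,N(S))=r|S|$, while on the other hand $e(S,N(S))\leqslant e(A,N(S))=r|N(S)|$; therefore $|N(S)|\geqslant|S|$. By Theorem~\ref{hallthe} there is a matching of $A$ into $B$, and since $|A|=|B|$ this matching is a perfect matching $M$ of $G$. (The argument works directly on $G$ even if $G$ is disconnected.)

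For the matching preclusion number, the upper bound is immediate: for any vertex $v$ the graph $G-\partial_G(v)$ has $v$ isolated, hence no perfect matching, so $\partial_G(v)$ is a matching preclusion set and $mp(G)\leqslant d_G(v)=r$. For the lower bound, let $F$ be any matching preclusion set; since $G-F$ has no perfect matching, $F$ must intersect every perfect matching of $G$, in particular each of $M_1,\ldots,M_r$, and as these are pairwise edge-disjoint we get $|F|\geqslant r$. Hence $mp(G)\geqslant r$, and combined with the upper bound $mp(G)=r$. (Alternatively one could invoke Corollary~\ref{mpstar_k_factor}: $G$ is itself an $r$-factor and has no $(r+1)$-factor for degree reasons, so $mp(G)\geqslant mp_f(G)\geqslant\lfloor mp_f(G)\rfloor=r$; but the edge-disjointness argument is more elementary.)

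There is no genuine obstacle here; the only points needing a little care are checking that the bipartition is balanced — so that a matching saturating $A$ is actually perfect — and the base case of the induction. The crux is the double-counting proof of Hall's condition, which is entirely standard.
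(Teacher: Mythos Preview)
Your proof is correct and matches the approach the paper indicates: the paper does not give its own proof of this cited result but remarks that ``by Hall's theorem, the following result can be easily shown,'' and your argument---double-counting to verify Hall's condition, induction on $r$ for the decomposition, and the disjoint-matchings lower bound for $mp(G)$---is exactly the standard execution of that hint.
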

    \begin{Cor}
        Let $G$ be an $r$-regular bipartite graph. Then $mp_f(G)=mp(G)=r$.
    \end{Cor}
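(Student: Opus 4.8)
The plan is to read the result off the two preceding statements, so the argument is short. Since $mp_f(G) \leqslant mp(G)$ holds for every graph (as observed just after the definition of (FMP)), and since Theorem \ref{regular_bipartite} already gives $mp(G) = r$, it suffices to prove $mp_f(G) \geqslant r$; equality is then forced.

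For the lower bound I would apply Corollary \ref{mpstar_k_factor}, which identifies $\lfloor mp_f(G)\rfloor$ with the largest integer $k$ for which $G$ admits a $k$-factor. Now $G$ itself is a spanning $r$-regular subgraph of $G$, hence an $r$-factor, so that largest $k$ is at least $r$; on the other hand $G$ has no $k$-factor with $k > r$, since a vertex of such a factor would need degree $k > r$, exceeding its degree in $G$. Hence the largest such $k$ is exactly $r$, so $\lfloor mp_f(G)\rfloor = r$ and in particular $mp_f(G) \geqslant r$. Combining this with $mp_f(G) \leqslant mp(G) = r$ yields $mp_f(G) = mp(G) = r$.

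I do not anticipate any genuine obstacle here: Corollary \ref{mpstar_k_factor} is a direct specialization of the machinery developed in Sections \ref{mp_general_graph} and \ref{mp_bigraph}, and the only step meriting a word of care is ruling out an $(r+1)$-factor, which is immediate from the degree bound. If a more self-contained proof of the lower bound were desired, one could instead point to the explicit optimal solution of (FMP) supplied by Theorem \ref{mpstar} with the choice $X = A$, $Y = B$ (here $|X|+|Y|-|A| = |A|$, every $\widetilde{y}_e$ equals $1/|A|$, and the objective value is $|E(G)|/|A| = r$); but one still has to know that this choice attains the minimum, which is exactly the $r$-factor criterion recorded just before Corollary \ref{mpstar_k_factor}, so routing through $mp_f(G) \leqslant mp(G)$ and Corollary \ref{mpstar_k_factor} is the cleanest path.
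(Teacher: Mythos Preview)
Your argument is correct and coincides with the paper's: the corollary is stated there as holding ``immediately by Corollary \ref{mpstar_k_factor}'' together with Theorem \ref{regular_bipartite}, exactly the route you take. The only superfluous step is ruling out an $(r+1)$-factor; for the inequality $mp_f(G)\geqslant r$ you need only $\lfloor mp_f(G)\rfloor\geqslant r$, which the existence of an $r$-factor (namely $G$ itself) already supplies, but this extra remark does no harm.
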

    On the other hand, we can show the gap between $mp_f(G)$ and $mp(G)$ may be very large by the following example. For each positive integer $k$, we construct a graph $G_k$ (see Fig. \ref{figure_G3}) as follows:
    $V(G_k)=A_k \cup B_k \cup C_k \cup D_k$ where $A_k=\{a_1,\ldots,a_{2k}\}$, $B_k=\{b_1,\ldots,b_{2k}\}$, $C_k=\{c_1,\ldots,c_k\}$ and $D_k=\{d_1,\ldots,d_k\}$, and $E(G_k)=\{a_ib_i \mid 1 \leqslant i \leqslant 2k\} \cup \{a_ic_j,b_id_j \mid 1 \leqslant i \leqslant 2k, 1 \leqslant j \leqslant k\}$. The following theorem shows $mp(G_k)-mp_f(G_k)=k-1$.

\begin{figure}[htbp]
    \centering
    \includegraphics[width=0.45\textwidth]{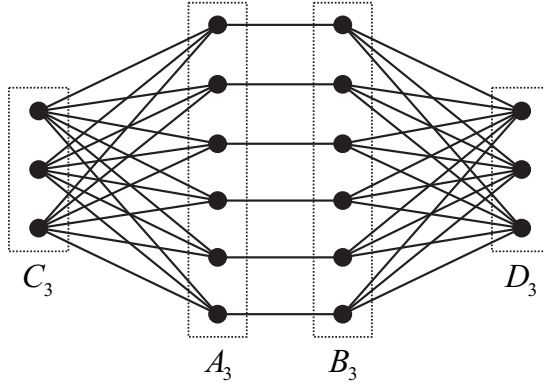}\\
    \caption{\footnotesize Graph $G_k$ ($k=3$).}\label{figure_G3}
\end{figure}

    \begin{The}
        Let $k$ be a positive integer. Then $mp(G_k)=k+1$ and $mp_f(G_k) = 2$.
    \end{The}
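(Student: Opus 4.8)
The plan is to prove the two equalities separately. Set $A=A_k\cup D_k$ and $B=B_k\cup C_k$, so that $G_k$ is bipartite with bipartition $(A,B)$ and $|A|=|B|=3k$, and note that $G_k$ has the perfect matching $\{a_ic_i:1\le i\le k\}\cup\{a_{k+i}b_{k+i}:1\le i\le k\}\cup\{b_id_i:1\le i\le k\}$. For the bound $mp_f(G_k)\le 2$, I would apply Theorem~\ref{mpstar} with $X=A_k$ and $Y=B_k$: the only edges of $G_k$ between $A_k$ and $B_k$ are the $2k$ edges $a_ib_i$, so $e(X,Y)=2k$ and $|X|+|Y|-|A|=2k+2k-3k=k>0$, whence $mp_f(G_k)\le e(X,Y)/(|X|+|Y|-|A|)=2$. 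For $mp_f(G_k)\ge 2$, I would exhibit the $2$-factor of $G_k$ with edge set $\{a_ic_{\lceil i/2\rceil},\ a_ib_i,\ b_id_{\lceil i/2\rceil}:1\le i\le 2k\}$, in which every vertex has degree exactly $2$; by Corollary~\ref{mpstar_k_factor} this forces $\lfloor mp_f(G_k)\rfloor\ge 2$, so altogether $mp_f(G_k)=2$.

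For $mp(G_k)$, the inequality $mp(G_k)\le k+1$ is immediate, since deleting the $k+1$ edges of $\partial_{G_k}(a_1)$ isolates $a_1$. For the reverse inequality, fix $F\subseteq E(G_k)$ with $|F|\le k$; I claim $G_k-F$ still has a perfect matching. Since $|A|=|B|$, by Hall's theorem (Theorem~\ref{hallthe}) it is enough to show that $|N_{G_k-F}(S)|\ge|S|$ for every $S\subseteq A$ (such a matching of $A$ into $B$ is automatically perfect). If this failed for some $S$, then, writing $T=N_{G_k-F}(S)$, every edge of $G_k$ joining $S$ to $B\setminus T$ would lie in $F$, so $e_{G_k}(S,B\setminus T)\le|F|\le k$ while $|S|>|T|$. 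Hence the lower bound reduces to the combinatorial claim
\[e_{G_k}(S,B\setminus T)\ge k+1\qquad\text{for all }S\subseteq A,\ T\subseteq B\text{ with }|S|>|T|,\]
which is moreover sharp, as witnessed by $S=A$ and $T=B\setminus\{b_1\}$.

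To prove the claim, I would write $S=S_A\sqcup S_D$ with $S_A\subseteq A_k$, $S_D\subseteq D_k$, and $B\setminus T=\overline{T}_B\sqcup\overline{T}_C$ with $\overline{T}_B\subseteq B_k$, $\overline{T}_C\subseteq C_k$, and set $a=|S_A|$, $d=|S_D|$, $p=|\overline{T}_B|$, $q=|\overline{T}_C|$. Since the edges of $G_k$ between $A_k$ and $B_k$ form a perfect matching, while those between $A_k$ and $C_k$ and between $D_k$ and $B_k$ are complete bipartite and there are none between $D_k$ and $C_k$, we get $e_{G_k}(S,B\setminus T)=r+aq+dp$, where $r=|\{i:a_i\in S_A,\ b_i\in\overline{T}_B\}|$ satisfies $\max\{0,a+p-2k\}\le r\le\min\{a,p\}$. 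From $a,p\le 2k$, $d,q\le k$ and $a+d+p+q\ge 3k+1$ one deduces $a+p\ge k+1$. If $a+p\ge 2k+1$, then $a,p\ge 1$ and $r\ge a+p-2k\ge 1$, so $aq+dp\ge q+d$ and $e_{G_k}(S,B\setminus T)\ge(a+p-2k)+(d+q)=(a+d+p+q)-2k\ge k+1$. If $a+p\le 2k$, then $d+q\ge k+1$, which forces $d,q\ge 1$, and a short subcase analysis on whether $a$ or $p$ vanishes completes it (for instance if $a=0$ then $r=0=aq$, while $p\ge k+1$ and $d\ge 1$ give $e_{G_k}(S,B\setminus T)=dp\ge k+1$). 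The main obstacle is exactly this sharp inequality: because the $A_k$--$B_k$ edges form a matching rather than a complete bipartite graph, $e_{G_k}(S_A,\overline{T}_B)$ is the overlap count $r$ and not the product $ap$, so one has to carry the bounds on $r$ through the case split; by contrast the explicit perfect matching, the explicit $2$-factor, and the extremal witnesses $S=A$, $T=B\setminus\{b_1\}$ and $X=A_k$, $Y=B_k$ are all routine to verify.
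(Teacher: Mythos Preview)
Your proof is correct. The treatment of $mp_f(G_k)=2$ matches the paper's almost exactly: the upper bound via Theorem~\ref{mpstar} with $X=A_k$, $Y=B_k$ is identical, and the lower bound in both cases is an explicit $2$-factor fed into Corollary~\ref{mpstar_k_factor} (your $2$-factor and the paper's $k$ disjoint $6$-cycles $c_ia_ib_id_ib_{i+k}a_{i+k}c_i$ differ only cosmetically).

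For the lower bound $mp(G_k)\ge k+1$ your route is genuinely different. The paper argues directly: given $F$ with $|F|\le k$, it splits into two cases according to whether $F$ meets $E(A_k,B_k)$ or not, and in each case locates two vertex-disjoint copies of $K_{k,k}$ inside $G_k$ each meeting $F$ in at most $k-1$ edges, so each still has a perfect matching by Theorem~\ref{regular_bipartite}, and these combine with suitable $a_ib_i$ edges to a perfect matching of $G_k-F$. You instead reduce via Hall's theorem to the sharp edge-count inequality $e_{G_k}(S,B\setminus T)\ge k+1$ whenever $|S|>|T|$, and prove that by elementary arithmetic on the four block sizes $a,d,p,q$. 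Your argument is self-contained and does not invoke Theorem~\ref{regular_bipartite}, at the cost of a small case analysis; the paper's argument is shorter once one is willing to quote the matching preclusion of $K_{k,k}$. In the unwritten subcase of your Case~B ($a+p\le 2k$ with $a,p\ge 1$) the bound $aq+dp\ge q+d\ge k+1$ indeed finishes it, so nothing is missing.
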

    \begin{proof}
        First we prove $mp(G_k)=k+1$. Noting that $\partial_{G_k}(a_1)$ is a matching preclusion set, we have $mp(G_k) \leqslant |\partial_{G_k}(a_1)|=k+1$. So it remains to show $mp(G_k)\geqslant k+1$. Next we show that for every vertex set $F$ with $|F| \leqslant k$, $G_k-F$ has a perfect matching.

        \textbf{Case 1.} $F \cap E(A_k,B_k)=\emptyset$. Then $F \subseteq E(A_k,C_k) \cup E(B_k,D_k)$. Suppose that $a_{2k}c_1 \in F$ without loss of generality. Noting that $H_1=G\left[\bigcup\limits_{i=1}^k\{a_i,c_i\}\right]$ and $H_2=G\left[\bigcup\limits_{i=1}^k\{b_i,d_i\}\right]$ are two $k$-regular complete bipartite graphs with $|E(H_1) \cap F| \leqslant k-1$ and $|E(H_2) \cap F| \leqslant k-1$, by Theorem \ref{regular_bipartite} we have that $H_1-F$ and $H_2-F$ have perfect matchings $M_1$ and $M_2$ respectively. Thus, $M_1\cup M_2 \cup \{a_ib_i \mid k+1 \leqslant i \leqslant 2k\}$ is a perfect matching of $G-F$.

        \textbf{Case 2.} $F \cap E(A_k,B_k) \neq \emptyset$. Then there exist at least $k$ edges in $E(A_k,B_k)\backslash F$ supposed to be $a_1b_1,a_2b_2,\ldots,a_kb_k$ without loss of generality. Then $G- \bigcup\limits_{i=1}^k\{a_i,b_i\}$ are two disjoint $k$-regular complete bipartite graphs, $H_1$ and $H_2$. Since $|E(H_1) \cap F| \leqslant k-1$ and $|E(H_2) \cap F| \leqslant k-1$, by Theorem \ref{regular_bipartite} we have that $H_1-F$ and $H_2-F$ have perfect matchings $M_1$ and $M_2$ respectively. Thus, $M_1\cup M_2 \cup \{a_ib_i \mid 1 \leqslant i \leqslant k\}$ is a perfect matching of $G-F$.

        Thus, we have $mp(G_k)=k+1$.
        On the the hand, $mp_f(G_k) \leqslant \frac{e(A_k,B_k)}{|A_k|+|B_k|-|V(G)|/2}=2$. Since $G_k$ has a $2$-factor consisting of $k$ disjoint cycles $c_ia_ib_id_ib_{i+k}a_{i+k}c_i$ where $1 \leqslant i \leqslant k$, we have $mp_f(G_k) \geqslant 2$ by Corollary \ref{mpstar_k_factor}. Thus, $mp_f(G_k)=2$.
    \end{proof}
\section{Cartesian Product of Bipartite Graphs}
    \label{mp_bigraph_product}
    In this section, we concentrate on Cartesian product of bipartite graphs. 
    The Cartesian product of two graphs $G$ and $H$ is a graph, denoted as $G \square H$, whose vertex set is $V(G) \times V(H)$, with two vertices $(g,h)$ and $(g',h')$ being adjacent if $g=g'$ and $hh' \in E(H)$, or $gg' \in E(G)$ and $h=h'$.

    Let $G$ and $H$ be two bipartite graphs with $V(G)=\{u_1,\ldots,u_n\}$, $E(G)=\{e_1,\ldots,e_m\}$, $V(H)=\{v_1,\ldots,v_p\}$ and $E(H)=\{f_1,\ldots,f_q\}$.
    We denote the incidence matrices of $G$ and $H$ by $M_G$ and $M_H$, respectively.
    Recall that the Kronecker product of two matrix $A$ and $B$, where $A=(a_{ij})$ is an $x \times y$ matrix, is defined by
    \begin{equation*}
        A \otimes B=
        \begin{pmatrix}
            a_{11}B & \cdots & a_{1y}B \\
            \vdots  & \ddots & \vdots  \\
            a_{x1}B & \cdots & a_{xy}B
        \end{pmatrix}.
    \end{equation*}
    Then the incidence matrix of $G \square H$ is $P=(I_p \otimes M_G, M_H \otimes I_n)$, where $I_k$ is $k \times k$ identity matrix. In order to compute $mp_f(G \square H)$, we should consider (BLP) for $G \square H$. So we rewrite the Constraint \eqref{BLP_incidence_constraint} of (BLP) in matrix form as $P\bm{b}=\bm{1}$ and denote the resulting linear programming by (BLP1). We accordingly represent $\bm{b}$ as
    \begin{equation*}
        \bm{b}=
        \begin{pmatrix}
            \bm{a}_1 \\ \vdots \\ \bm{a}_p \\ \bm{h}_1 \\ \vdots \\ \bm{h}_q
        \end{pmatrix},
        \text{where } \bm{a}_i=
        \begin{pmatrix}
            a_i^1 \\ \vdots \\ a_i^m
        \end{pmatrix}
        \text{and } \bm{h}_j=
        \begin{pmatrix}
            h_j^1 \\ \vdots \\ h_j^n
        \end{pmatrix}
        \text{for each } 1\leqslant i \leqslant p \text{ and } 1\leqslant j \leqslant q.
    \end{equation*}
    Let $G_i$ be the subgraph of $G \square H$ induced by $\{(a,v_i) \mid a \in V(G)\}$. Then we can see that $\bm{a}_i$ is an incidence vector of $G_i$, and if $f_j=v_xv_y$, then $\bm{h}_j$ indicates the edges between $V(G_x)$ and $V(G_y)$.
    \begin{Lem}
        \label{cartesian_product_pm}
        Let $G$ and $H$ be two bipartite graphs. If $H$ has a $k$-factor, then $G \square H$ has a $k$-factor.
    \end{Lem}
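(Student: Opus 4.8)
The plan is to give a direct construction: lift a $k$-factor of $H$ to a $k$-factor of $G \square H$ by placing one copy of it inside every ``$H$-layer'' indexed by the vertices of $G$.

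First I would fix a $k$-factor $F$ of $H$, which exists by hypothesis. For each $g \in V(G)$, let $H_g$ be the subgraph of $G \square H$ induced by $\{(g,h) \mid h \in V(H)\}$; by the definition of the Cartesian product, two vertices $(g,h)$ and $(g,h')$ are adjacent in $G \square H$ exactly when $hh' \in E(H)$, so the map $h \mapsto (g,h)$ is an isomorphism from $H$ onto $H_g$. Hence $F$ pulls back to a subgraph $F_g \subseteq H_g$ that is a $k$-factor of $H_g$. I would then set $F^{\ast} = \bigcup_{g \in V(G)} E(F_g)$, a spanning subgraph of $G \square H$, and check the degrees: every vertex $(g,h)$ of $G \square H$ lies in exactly the one layer $H_g$, the layers are pairwise vertex-disjoint, and no edge of $F_{g'}$ with $g' \neq g$ is incident with $(g,h)$; therefore $d_{F^{\ast}}((g,h)) = d_{F_g}((g,h)) = d_F(h) = k$. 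Since this holds for every vertex, $F^{\ast}$ is a $k$-factor of $G \square H$, and the lemma follows.

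I do not expect a genuine obstacle here; the only point that needs care is the choice to lift the factor of $H$ (rather than of $G$) into the $V(G)$-indexed copies, so that each vertex receives its full degree $k$ from exactly one layer and the contributions from distinct layers do not interfere. One could alternatively verify condition $(ii)$ of Theorem \ref{existence_r_factor} for $G \square H$ with $f \equiv k$, using that $G \square H$ is bipartite whenever $G$ and $H$ are, but this route is considerably more laborious than the layered construction and is not needed. (In fact bipartiteness of $G$ and $H$ plays no role in the argument above; it is retained only to stay within the setting of the section.)
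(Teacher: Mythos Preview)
Your construction is correct and is precisely the paper's argument: the paper fixes a $k$-factor $F$ of $H$ and takes $F' = \bigcup_{uv \in E(F)} \{(w,u)(w,v) \mid w \in V(G)\}$, which is exactly your union $\bigcup_{g} E(F_g)$ of layerwise copies. Nothing further is needed.
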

    \begin{proof}
        Let $F$ be a $k$-factor of $H$. Then the spanning subgraph $F'$ of $G \square H$ with edge set $E(F')=\bigcup\limits_{uv \in E(F)}\{(w,u)(w,v) \mid w \in V(G)\}$ is a $k$-factor of $G \square H$.
    \end{proof}
    \begin{Lem}
        \label{same_ah}
        Let $G$ and $H$ be two bipartite graphs. If $H$ is $r$-regular, then (BLP1) has an optimal solution such that $\bm{a}_1=\cdots=\bm{a}_p$ and $\bm{h}_1=\bm{h}_2=\cdots=\bm{h}_q$.
    \end{Lem}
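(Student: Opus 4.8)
The plan is a symmetrisation argument: starting from any optimal solution of (BLP1), I would average it over all $p$ layers $\bm a_i$ and over all $q$ ``bridge'' blocks $\bm h_j$, and then show that the resulting uniform vector is still optimal; the $r$-regularity of $H$ is exactly what makes the averaged vector feasible. It is worth noting first that (BLP1) is feasible and bounded in the situation where the lemma is applied, so an optimal solution to start from exists.

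Concretely, let $(z^*,\bm b^*)$ be an optimal solution of (BLP1), with blocks $\bm a_1^*,\dots,\bm a_p^*\in\mathbb R^{E(G)}$ and $\bm h_1^*,\dots,\bm h_q^*\in\mathbb R^{V(G)}$ as in the paper. First I would read off the constraint $P\bm b^*=\bm 1$ block by block: the block of rows of $P\bm b^*$ indexed by $\{(u_s,v_i):1\le s\le n\}$ equals $M_G\bm a_i^*+\sum_{j:\,v_i\in f_j}\bm h_j^*$, so for every $v_i\in V(H)$ and $u_s\in V(G)$,
\[
  (M_G\bm a_i^*)_s+\sum_{j:\,v_i\in f_j}(h_j^*)^s=1 .
\]
Then set $\bar{\bm a}=\tfrac1p\sum_{i=1}^p\bm a_i^*$ and $\bar{\bm h}=\tfrac1q\sum_{j=1}^q\bm h_j^*$, and let $\bm b$ be the vector all of whose $\bm a$-blocks equal $\bar{\bm a}$ and all of whose $\bm h$-blocks equal $\bar{\bm h}$. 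Nonnegativity of $\bm b$ is immediate. For the incidence constraints, fix $v_i,u_s$; since $\bar{\bm h}$ is independent of $j$ and $d_H(v_i)=r$, the corresponding entry of $P\bm b$ is $(M_G\bar{\bm a})_s+r\,\bar h^{\,s}$. Averaging the displayed identity over $i$, swapping the order of summation, and using that each edge $f_j$ has exactly two endpoints together with $q=|E(H)|=pr/2$, one obtains
\[
  (M_G\bar{\bm a})_s=\frac1p\sum_{i=1}^p\Bigl(1-\sum_{j:\,v_i\in f_j}(h_j^*)^s\Bigr)=1-\frac1p\sum_{j=1}^q 2\,(h_j^*)^s=1-\frac{2q}{p}\,\bar h^{\,s}=1-r\,\bar h^{\,s},
\]
so $(M_G\bar{\bm a})_s+r\,\bar h^{\,s}=1$ and hence $P\bm b=\bm 1$. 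Finally, every entry of $\bar{\bm a}$ and $\bar{\bm h}$ is an average of entries of $\bm b^*$, each of which is at most $z^*$; therefore $z:=\max\{b_e:e\in E(G\square H)\}\le z^*$, so $(z,\bm b)$ is feasible for (BLP1) with objective value $\le z^*$, and by optimality of $z^*$ it is itself an optimal solution — with all $\bm a_i$ equal and all $\bm h_j$ equal, as required.

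I do not expect a genuine obstacle here; the one place that must be checked with care is the cancellation $(M_G\bar{\bm a})_s+r\,\bar h^{\,s}=1$. This is precisely where $r$-regularity of $H$ enters, through $2q=pr$ and $d_H(v_i)=r$: if $H$ were only assumed bipartite, the averaged $\bm a$-blocks would in general fail the incidence constraints at vertices $(u_s,v_i)$ with $d_H(v_i)\neq 2q/p$, and a single uniform average would no longer work.
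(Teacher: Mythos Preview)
Your argument is correct, and it differs from the paper's proof in a clean way. The paper does \emph{not} average over all layers: it locates a particular index $t$ (a layer, respectively an edge, at which the maximum entry of $\bm b^*$ is attained), then sets every $\bm a_i$ equal to the single block $\widetilde{\bm a}_t$ and every $\bm h_j$ equal to the \emph{local} average $\tfrac1r\sum_{f_l\in\partial_H(v_t)}\widetilde{\bm h}_l$. Feasibility at an arbitrary vertex $(u_i,v_j)$ is then reduced, via $d_H(v_j)=r$, to the original constraint at $(u_i,v_t)$. Your approach instead takes the \emph{global} averages $\bar{\bm a}=\tfrac1p\sum_i\bm a_i^*$ and $\bar{\bm h}=\tfrac1q\sum_j\bm h_j^*$, and the key cancellation $(M_G\bar{\bm a})_s+r\,\bar h^{\,s}=1$ comes from the handshaking identity $2q=pr$. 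The trade-off: the paper's method avoids the handshaking step and needs only one row of the original constraint system, at the cost of a case split on where the maximum occurs; your method is more symmetric, avoids the case distinction, and makes transparent that the new vector is a convex combination of entries of $\bm b^*$ (hence $\le z^*$ entrywise). Both use $r$-regularity in an essential but slightly different place.
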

    \begin{proof}
        Noting that a perfect matching of graph is also a $1$-factor of this graph,
        since $H$ has a perfect matching by Theorem \ref{regular_bipartite}, $G \square H$ has a perfect matching $M$ by Lemma \ref{cartesian_product_pm}. So $(1,\bm{q}^M)$ is a feasible solution of (BLP1), then (BLP1) has an optimal solution.
        Let $(\widetilde{z},\widetilde{\bm{b}})$ be an optimal solution of (BLP1) with $\widetilde{\bm{b}}=(\widetilde{\bm{a}}_1^T,\ldots,\widetilde{\bm{a}}_p^T,\widetilde{\bm{h}}_1^T,\ldots,\widetilde{\bm{h}}_q^T)^T$. Then $\widetilde{z}$ equals to some entry of $\widetilde{\bm{b}}$. So $\widetilde{z}=\max\limits_{1\leqslant k \leqslant m}\widetilde{a}_t^k$ for some $t \in \{1, \ldots ,p\}$ or $\widetilde{z}=\max\limits_{1 \leqslant k \leqslant n}\widetilde{h}_s^k$ for some $s \in \{1, \ldots ,q\}$. 

        If $\widetilde{z}=\max\limits_{1\leqslant k \leqslant m}\widetilde{a}_t^k$, then we set $\bm{a}_1=\cdots=\bm{a}_p=\widetilde{\bm{a}}_t$ and $\bm{h}_1=\cdots=\bm{h}_q=\frac{1}{r}\sum\limits_{f_l\in \partial_H(v_t)}\widetilde{\bm{h}}_l=\widetilde{\bm{h}}_{o}$. 
        So for each $(u_i,v_j)\in V(G \square H)$, we have
        \begin{align*}
            \bm{\partial}_{G \square H}((u_i,v_j))\bm{b} &= \bm{\partial}_G(u_i)\bm{a}_j + \sum\limits_{f_k \in \partial_H(v_j)}h_k^i \\
                                                         &= \bm{\partial}_G(u_i)\widetilde{\bm{a}}_t + \sum\limits_{f_k \in \partial_H(v_j)} (\frac{1}{r}\sum\limits_{f_l\in \partial_H(v_t)}\widetilde{h}_l^i) \\
                                                         &= \bm{\partial}_G(u_i)\widetilde{\bm{a}}_t + \sum\limits_{f_l\in \partial_H(v_t)}\widetilde{h}_l^i \\
                                                         &= \bm{\partial}_{G \square H}((u_i,v_t))\bm{\widetilde{b}} \\
                                                         &= 1,
        \end{align*}
        which means $P\bm{b}=\bm{1}$. Moreover, we have that for each $1\leqslant i \leqslant p$ and $1 \leqslant j \leqslant m$,
        \begin{equation*}
            a_i^j=\widetilde{a}_t^j \leqslant \widetilde{z}, 
        \end{equation*}
        and for each $1 \leqslant x \leqslant q$ and $1\leqslant y \leqslant n$,
        \begin{equation*}
            h_x^y=\frac{1}{r}\sum\limits_{f_l\in \partial_H(v_t)}\widetilde{h}_l^y \leqslant \max\limits_{f_l\in \partial_H(v_t)}\widetilde{h}_l^y \leqslant \widetilde{z}.
        \end{equation*}
        Then $(\widetilde{z},\bm{b})$ with $\bm{b}=(\widetilde{\bm{a}}_t^T,\ldots,\widetilde{\bm{a}}_t^T,\widetilde{\bm{h}}_o^T,\ldots,\widetilde{\bm{h}}_o^T)^T$ is a feasible solution of (BLP1). Furthermore, note that $(\widetilde{z},\widetilde{\bm{b}})$ is optimal, we have that $(\widetilde{z},\bm{b})$ is optimal too.

        Next we suppose $\widetilde{z}=\max\limits_{1 \leqslant k \leqslant n}\widetilde{h}_s^k$. We only need to denote one end of $f_s$ by $v_t$, and set $\widetilde{\bm{h}}_{o}=\frac{1}{r}\sum\limits_{f_l\in \partial_H(v_t)}\widetilde{\bm{h}}_l$ and $\bm{b}=(\widetilde{\bm{a}}_t^T,\ldots,\widetilde{\bm{a}}_t^T,\widetilde{\bm{h}}_o^T,\ldots,\widetilde{\bm{h}}_o^T)^T$. Then by a completely similar argument as above case, we can show $(\widetilde{z},\bm{b})$ is optimal.
    \end{proof}
    If $H$ is $r$-regular and $\bm{b}=(\bm{a}^T,\ldots,\bm{a}^T,\bm{h}^T,\ldots,\bm{h}^T)^T$, then $P\bm{b}=\bm{1}$ is equivalent to $M_G\bm{a}+r\bm{h}=\bm{1}$. So we define the linear programming (BLP2) as follows:

    \quad

    \noindent (BLP2):
    \vspace{-1.7\baselineskip}
    \begin{alignat*}{2}
        \min \quad        & z                            &{}    & \\
        \mbox{s.t.}\quad  & z-a_e \geqslant 0,           &{}    & \text{for every edge } e \in E(G) \\
                          & z-h_v \geqslant 0,           &{}    & \text{for every vertex } v \in V(G) \\
                          & M_G\bm{a}+r\bm{h}=\bm{1}     &\quad & {} \\
                          & \bm{a} \geqslant 0           &{}    & \\
                          & \bm{h} \geqslant 0.          &{}    &
    \end{alignat*}
    By Lemma \ref{same_ah}, the following corollary holds clearly.
    \begin{Cor}
        Let $G$ and $H$ be two bipartite graphs. If $H$ is $r$-regular, then (BLP1) and (BLP2) has the same optimal objective value, that is, $L(G \square H)$.
        \label{same_obj}
    \end{Cor}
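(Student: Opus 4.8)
The plan is to show the two linear programs attain equal optimal value by a two-sided argument — using Lemma~\ref{same_ah} for one direction and a direct block-lifting construction for the other — and then to identify the common value with $L(G \square H)$. Throughout we may assume $r \geqslant 1$ (if $r=0$, $H$ has no edge and the statement degenerates). First I would note that $G \square H$ is bipartite, being a Cartesian product of bipartite graphs, and has a perfect matching: $H$ has one by Theorem~\ref{regular_bipartite}, hence so does $G \square H$ by Lemma~\ref{cartesian_product_pm}. Since (BLP1) is precisely (BLP) for the graph $G \square H$ with its incidence constraint written in matrix form, its optimal objective value equals $L(G \square H)$ by Lemma~\ref{optimal of BLP}. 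It therefore remains to prove that (BLP1) and (BLP2) attain the same optimal value.

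For one inequality, I would take an optimal solution $(z,\bm{a},\bm{h})$ of (BLP2) and form $\bm{b}=(\bm{a}^T,\ldots,\bm{a}^T,\bm{h}^T,\ldots,\bm{h}^T)^T$ with $p$ copies of $\bm{a}$ and $q$ copies of $\bm{h}$. Since $H$ is $r$-regular, the identity recorded just before the definition of (BLP2) shows that $P\bm{b}=\bm{1}$ is equivalent to $M_G\bm{a}+r\bm{h}=\bm{1}$, so the incidence constraints of (BLP1) hold. Moreover each edge of $G \square H$ is either a within-layer copy of some $e \in E(G)$, carrying value $a_e$, or a between-layer edge lying over some $v \in V(G)$, carrying value $h_v$; hence the two constraint families $z-a_e \geqslant 0$ $(e \in E(G))$ and $z-h_v \geqslant 0$ $(v \in V(G))$ are exactly the family $z-b_{e'} \geqslant 0$ $(e' \in E(G \square H))$, and $\bm{b}\geqslant 0$. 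So $(z,\bm{b})$ is feasible for (BLP1) with the same objective value, and the optimal value of (BLP1) is at most that of (BLP2).

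For the reverse inequality, Lemma~\ref{same_ah} supplies an optimal solution $(\widetilde{z},\widetilde{\bm{b}})$ of (BLP1) with $\widetilde{\bm{a}}_1 = \cdots = \widetilde{\bm{a}}_p =: \bm{a}$ and $\widetilde{\bm{h}}_1 = \cdots = \widetilde{\bm{h}}_q =: \bm{h}$; reading the constraints of (BLP1) for this $\widetilde{\bm{b}}$ through the same edge correspondence gives exactly $M_G\bm{a}+r\bm{h}=\bm{1}$, $\widetilde{z}-a_e \geqslant 0$ for $e \in E(G)$, $\widetilde{z}-h_v \geqslant 0$ for $v \in V(G)$, and $\bm{a},\bm{h}\geqslant 0$. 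Thus $(\widetilde{z},\bm{a},\bm{h})$ is feasible for (BLP2) with objective value $\widetilde{z}$, the optimal value of (BLP1). Combining the two inequalities shows that (BLP1) and (BLP2) have the same optimal value, namely $L(G \square H)$. I do not expect a genuine obstacle here: the only point requiring care is the bookkeeping of the correspondence between $E(G \square H)$ and the index sets $E(G)$ and $V(G)$ underlying the blocks of $\bm{b}$ — which is precisely what makes the constraint system of (BLP1), restricted to block-repetition solutions, coincide with that of (BLP2).
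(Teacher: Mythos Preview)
Your proposal is correct and follows essentially the same approach as the paper, which simply states that the corollary ``holds clearly'' by Lemma~\ref{same_ah}; you have written out the two-sided bookkeeping that the paper leaves implicit. The key ingredient in both is Lemma~\ref{same_ah}, together with the observation (stated just before (BLP2)) that for block-constant $\bm{b}$ the system $P\bm{b}=\bm{1}$ reduces to $M_G\bm{a}+r\bm{h}=\bm{1}$.
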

    The following result on the existence of circulation due to Hoffman is useful in our proof.
    \begin{The}[Hoffman's Circulation Theorem, \cite{circulation_theorem}]
        \label{circulation_theorem}
        Given a digraph $D$ and lower bound $l$ and capacity $c$ on $E(D)$, there exists a feasible circulation if and only if every $R \subseteq V(D)$ satisfies
        \begin{equation*}
            c(V(D)\backslash R,R) \geqslant l(R,V(D) \backslash R).
        \end{equation*}
    \end{The}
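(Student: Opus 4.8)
The plan is to derive both directions from the Max-Flow Min-Cut Theorem (Theorem~\ref{maxmin}) after reducing the circulation-with-lower-bounds problem to an ordinary maximum-flow problem; throughout I assume the standing hypothesis $l(u,v)\leqslant c(u,v)$ on each arc, without which no feasible circulation can exist.

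Necessity is a short averaging argument. Given a feasible circulation $f$ and any $R\subseteq V(D)$, I would sum the conservation conditions $x_f(v)=0$ over $v\in R$. The contributions of arcs internal to $R$ cancel, leaving $f(R,V(D)\backslash R)=f(V(D)\backslash R,R)$. Combining this with the arcwise bounds $l\leqslant f\leqslant c$ gives $l(V(D)\backslash R,R)\leqslant f(V(D)\backslash R,R)=f(R,V(D)\backslash R)\leqslant c(R,V(D)\backslash R)$; applying this to the complement $V(D)\backslash R$ in place of $R$ produces exactly the stated inequality $c(V(D)\backslash R,R)\geqslant l(R,V(D)\backslash R)$.

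For sufficiency, I would substitute $f=l+g$, so that $f$ is a feasible circulation if and only if $0\leqslant g(u,v)\leqslant c(u,v)-l(u,v)$ for every arc and $x_g(v)=b(v)$ for every vertex, where $b(v):=\sum_{u\in N^-_D(v)}l(u,v)-\sum_{w\in N^+_D(v)}l(v,w)$ records the imbalance forced by the lower bounds (and $\sum_{v}b(v)=0$). This is a feasible-flow problem with prescribed excesses, which I would encode as a maximum-flow instance on an auxiliary digraph $D'$: retain each arc of $D$ with capacity $c-l$, adjoin a source $s$ with an arc $(s,v)$ of capacity $b(v)$ for each $v$ with $b(v)>0$, and a sink $t$ with an arc $(v,t)$ of capacity $-b(v)$ for each $v$ with $b(v)<0$. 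A routine check of the conservation equations shows that the required $g$ exists precisely when $D'$ has an $s$-$t$ flow that saturates every arc leaving $s$, i.e. one of value $B:=\sum_{b(v)>0}b(v)$, which is the maximum value possible.

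Finally I would invoke Theorem~\ref{maxmin}: a flow of value $B$ exists if and only if every $s$-$t$ cut has capacity at least $B$. The main obstacle is the bookkeeping that converts this cut condition into Hoffman's inequality. Writing a cut as the outcut of $\{s\}\cup R$ for $R\subseteq V(D)$, I would tally its three kinds of crossing arcs---source arcs to vertices outside $R$, sink arcs from vertices inside $R$, and original arcs from $R$ to $V(D)\backslash R$---to obtain capacity $B-\sum_{v\in R}b(v)+(c-l)(R,V(D)\backslash R)$. Using the identity $\sum_{v\in R}b(v)=l(V(D)\backslash R,R)-l(R,V(D)\backslash R)$, the requirement that this capacity be at least $B$ collapses to $c(R,V(D)\backslash R)\geqslant l(V(D)\backslash R,R)$, which, ranging over all $R$ (equivalently, over all complements), is exactly Hoffman's condition. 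Care is needed with the sign conventions in the definition of $b(v)$ and with verifying that quantifying the min-cut inequality over all $R$ reproduces the stated form.
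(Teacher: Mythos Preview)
The paper does not contain a proof of this statement: Hoffman's Circulation Theorem is quoted as an external result (with citation \cite{circulation_theorem}) and is used as a black box in the proof of Theorem~\ref{mpstar_product}. There is therefore no ``paper's own proof'' to compare your attempt against.

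That said, your argument is the standard derivation and is correct. The necessity direction is exactly the usual summation of conservation over $R$. For sufficiency, your substitution $f=l+g$, the auxiliary $s$--$t$ network with capacities $c-l$ on original arcs and $|b(v)|$ on the new source/sink arcs, and the invocation of Theorem~\ref{maxmin} are all sound. Your bookkeeping is right as well: with the paper's convention $x_f(v)=\sum_{w\in N^+_D(v)}f(v,w)-\sum_{u\in N^-_D(v)}f(u,v)$, one indeed gets $x_g(v)=b(v)$ with your sign choice, the cut capacity of $\{s\}\cup R$ simplifies to $B-\sum_{v\in R}b(v)+(c-l)(R,V(D)\backslash R)$, and the identity $\sum_{v\in R}b(v)=l(V(D)\backslash R,R)-l(R,V(D)\backslash R)$ collapses the min-cut condition to $c(R,V(D)\backslash R)\geqslant l(V(D)\backslash R,R)$, which after swapping $R$ with its complement is precisely the stated inequality. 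The one assumption you flag explicitly---$l\leqslant c$ arcwise---is implicit in the paper's setup and is needed for the auxiliary capacities $c-l$ to be nonnegative.
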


    \begin{The}
        \label{mpstar_product}
        Let $G=G(A,B)$ be a bipartite graph with $|A|=|B|$ and $H$ be an $r$-regular bipartite graph. Then
        \begin{equation*}
            mp_f(G \square H)= \min \left\{ \left. \frac{e(X,Y)+r\min \{|X|,|Y|\}}{|X|+|Y|-|A|} \right| X \subseteq A, Y \subseteq B, |X|+|Y|-|A|>0 \right\}.
        \end{equation*}
    \end{The}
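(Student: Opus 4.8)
The plan is to follow the template of the proof of Theorem~\ref{mpstar}, with Hoffman's Circulation Theorem in place of Max-Flow Min-Cut. Since $H$ is $r$-regular bipartite it has a perfect matching by Theorem~\ref{regular_bipartite}, so $G\square H$ has a perfect matching by Lemma~\ref{cartesian_product_pm} (take $k=1$); hence Theorem~\ref{mp_to_lp} together with Corollary~\ref{same_obj} gives $mp_f(G\square H)=1/L(G\square H)$, where $L(G\square H)$ is the optimal value of (BLP2). (If $r=0$ then $G\square H$ is a disjoint union of copies of $G$ and the statement is exactly Theorem~\ref{mpstar}; so assume $r\geqslant 1$.) It thus suffices to show that $L(G\square H)=z^{*}$, where $z^{*}:=\max\bigl\{(|X|+|Y|-|A|)/(e(X,Y)+r\min\{|X|,|Y|\}) : X\subseteq A,\ Y\subseteq B,\ |X|+|Y|-|A|>0\bigr\}$; then taking reciprocals yields the claimed formula (for such $X,Y$ the denominator is positive, since $|X|+|Y|>|A|$ forces $\min\{|X|,|Y|\}\geqslant 1$).

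Fix $z\geqslant 0$ and let $D=D(z)$ be the digraph on $\{s,t\}\cup V(G)$ with an arc $(s,a)$ for each $a\in A$ and an arc $(b,t)$ for each $b\in B$, each having lower bound $\lambda:=\max\{0,1-rz\}$ and capacity $1$; an arc $(a,b)$ for each $ab\in E(G)$ with lower bound $0$ and capacity $z$; and an arc $(t,s)$ with lower bound $0$ and capacity $+\infty$. The first step is to prove the equivalence: \emph{$(z,\bm a,\bm h)$ is a feasible solution of} (BLP2) \emph{iff $D(z)$ has a feasible circulation}. From a feasible circulation $f$ set $a_{ab}=f(a,b)$, $h_u=(1-f(s,u))/r$ for $u\in A$ and $h_u=(1-f(u,t))/r$ for $u\in B$: the arc bounds become $0\leqslant a_e\leqslant z$ and $0\leqslant h_u\leqslant z$, and conservation at $u\in V(G)$ becomes the $u$-th row of $M_G\bm a+r\bm h=\bm 1$. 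Conversely, from a feasible $(z,\bm a,\bm h)$ set $f(a,b)=a_{ab}$, $f(s,a)=1-rh_a$, $f(b,t)=1-rh_b$, $f(t,s)=\sum_{a\in A}(1-rh_a)$: the constraints of (BLP2) translate into the arc bounds (using $rh_u\leqslant 1$, which follows from $M_G\bm a+r\bm h=\bm 1$ and $\bm a\geqslant 0$, to get $f\geqslant 0$), conservation holds at each $u\in V(G)$ by the incidence constraint, and conservation at $t$ (equivalently $s$) follows from $\sum_{e\in E(G)}a_e=|A|-r\sum_{a\in A}h_a=|B|-r\sum_{b\in B}h_b$, where $|A|=|B|$ is used.

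By this equivalence, $L(G\square H)$ is the least $z$ for which $D(z)$ has a feasible circulation, which by Hoffman's Circulation Theorem~\ref{circulation_theorem} is the least $z$ such that $c(V(D)\setminus R,R)\geqslant l(R,V(D)\setminus R)$ for every $R\subseteq V(D)$. If $z\geqslant 1/r$ then $\lambda=0$ and every such inequality holds, so $L(G\square H)\leqslant 1/r$; assume henceforth $z<1/r$, so $\lambda=1-rz>0$. The crux is that only two types of $R$ give nontrivial constraints: if $s\in R$ and $t\notin R$, the arc $(t,s)$ sends capacity $+\infty$ into $R$; if $t\in R$ and $s\notin R$, no arc with positive lower bound leaves $R$. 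For $\{s,t\}\subseteq R$, putting $X=A\setminus R$ and $Y=B\cap R$, the inequality becomes $z\,e(X,Y)+|B\setminus Y|\geqslant \lambda|X|$, that is, $z\bigl(e(X,Y)+r|X|\bigr)\geqslant |X|+|Y|-|A|$; for $\{s,t\}\cap R=\emptyset$, again with $X=A\setminus R$ and $Y=B\cap R$, it becomes $(|A|-|X|)+z\,e(X,Y)\geqslant \lambda|Y|$, that is, $z\bigl(e(X,Y)+r|Y|\bigr)\geqslant |X|+|Y|-|A|$. As $R$ ranges over the admissible sets, $(X,Y)$ ranges over all of $2^{A}\times 2^{B}$; the inequalities with $|X|+|Y|-|A|\leqslant 0$ are automatic, and those with $|X|+|Y|-|A|>0$ are together equivalent to $z\geqslant (|X|+|Y|-|A|)/(e(X,Y)+r\min\{|X|,|Y|\})$. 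Hence $D(z)$ has a feasible circulation exactly when $z\geqslant z^{*}$; the maximum defining $z^{*}$ is attained and $z^{*}\leqslant 1/r$ (since $\max\{|X|,|Y|\}\leqslant |A|$ gives $e(X,Y)+r\min\{|X|,|Y|\}\geqslant r(|X|+|Y|-|A|)$), so $L(G\square H)=z^{*}$ and $mp_f(G\square H)=1/z^{*}$, proving the theorem.

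The main obstacle is the Hoffman case analysis in the third paragraph: seeing that the unbounded arc $(t,s)$ disposes of the two potentially hard choices of $R$, evaluating $c(V(D)\setminus R,R)$ and $l(R,V(D)\setminus R)$ correctly in the two remaining cases, and observing that it is the \emph{combination} of the two surviving families of inequalities, not either one by itself, that yields the term $r\min\{|X|,|Y|\}$. A minor but necessary point is the bound $rh_u\leqslant 1$, which comes from the incidence constraint $M_G\bm a+r\bm h=\bm 1$ with $\bm a\geqslant 0$ and underlies the nonnegativity of $f$ in the equivalence of the second paragraph.
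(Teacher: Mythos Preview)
Your proof is correct and follows essentially the same approach as the paper's: reduce to (BLP2) via Corollary~\ref{same_obj}, build a circulation network on $V(G)\cup\{s,t\}$, and apply Hoffman's Circulation Theorem~\ref{circulation_theorem} to characterize the feasible values of $z$. Your version streamlines the case analysis by taking $c(t,s)=+\infty$ rather than $|A|$ (which kills the case $s\in R$, $t\notin R$ immediately; the paper carries it through and obtains a redundant inequality) and by setting the lower bound to $\max\{0,1-rz\}$ and handling $r=0$ separately, which is more careful than the paper but not substantively different.
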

    \begin{proof}
        Since $H$ has a perfect matching by Theorem \ref{regular_bipartite}, $G \square H$ has a perfect matching by Lemma \ref{cartesian_product_pm}. So by Theorem \ref{mp_to_lp}, Lemma \ref{optimal of BLP} and Corollary \ref{same_obj}, we only need to solve (BLP2). Now we construct a circulation to determine the optimal objective value of (BLP2). Let $D$ be a digraph with $V(D)=V(G) \cup \{s,t\}$ and $E(D)=\{(s,a) \mid a \in A\} \cup \{(b,t) \mid b \in B\} \cup \{(a,b) \mid a \in A, b \in B, ab \in E(G)\} \cup \{(t,s)\}$. Then we assign lower bound and capacity as follows:
        \begin{align*}
            c(u,v)&=\left\{
            \begin{aligned}
                &1,   &\quad & \text{if $u=s,v \in A$ or $u \in B,v=t$,} \\
                &z,   &{}    & \text{if $u \in A$ and $v \in B$,} \\
                &|A|, &{}    & \text{if } (u,v)=(t,s).
            \end{aligned}
            \right. \\
            l(u,v)&=\left\{
            \begin{aligned}
                &1-rz, &\quad & \text{if $u=s,v \in A$ or $u \in B,v=t$,} \\
                &0,   &{}    & \text{if $u \in A$ and $v \in B$ or $(u,v)=(t,s)$,} \\
            \end{aligned}
            \right.
        \end{align*}
        We claim that $(z,\bm{a},\bm{h})$ is a feasible solution of (BLP2) if and only if $H$ has a feasible circulation.

        If $D$ has a feasible circulation $f$, then we set
        \begin{align*}
            h_v&=\left\{
            \begin{aligned}
                &\frac{1}{r}(1-f(s,v)), &\quad & \text{if } v \in A, \\
                &\frac{1}{r}(1-f(v,t)), &{}    & \text{if } v \in B, \\
            \end{aligned}
            \right. \\
            a_{uv}&=f(u,v) \text{ for all } u \in A \text{ and } v \in B.
        \end{align*}
        Then by the conservation condition, for every vertex $u \in A$, we have
        \begin{equation*}
            \bm{\partial}_G(u)\bm{a}+rh_u=\sum\limits_{v\in N^+_D(u)} f(u,v)+r \cdot \frac{1}{r}(1-f(s,u))=1-f(s,u)+\sum\limits_{v\in N^+_D(u)} f(u,v)=1.
        \end{equation*}
        By lower bound and capacity constraint, we can verify that $(z,\bm{a},\bm{h})$ satisfies the rest constraints of (BLP2). So $(z,\bm{a},\bm{h})$ is a feasible solution of (BLP2). 
        Conversely, if $(z,\bm{a},\bm{h})$ satisfies the constraints of (BLP2), then we define a function $f$ on $E(D)$ as follows:
        \begin{equation*}
            f(u,v)=\left\{
            \begin{aligned}
                &1-rh_u,                  &\quad & \text{if $u=s$ and $v \in A$,} \\
                &1-rh_v,                  &\quad & \text{if $u \in B$ and $v=t$,} \\
                &a_{uv},                 &{}    & \text{if $u \in A$ and $v \in B$,} \\
                &|B|-r\sum_{v \in B}h_v,  &\quad & \text{if } (u,v)=(t,s).
            \end{aligned}
            \right.
        \end{equation*}
        Clearly, $f$ is a feasible circulation. Thus, our claim holds.

        By Theorem \ref{circulation_theorem}, $(z,\bm{a},\bm{h})$ is a feasible solution of (BLP2) if and only if every $R \subseteq V(D)$ satisfies $c(V(D)\backslash R,R) \geqslant l(R,V(D) \backslash R)$. We set $X=A\backslash R$ and $Y=B \cap R$.

        \textbf{Case 1.} $s \notin R$ and $t \notin R$. Then
        \begin{align*}
            c(V(D) \backslash R,R)&=c(X,Y)+c(s,A \backslash X)=ze(X,Y)+|A \backslash X|, \\
            l(R,V(D) \backslash R)&=l(A \backslash X,B \backslash Y)+l(Y,t)=|Y|(1-rz).
        \end{align*}
        If $Y = \emptyset$, then $e(X,Y)=|Y|=0$, so $c(V(D) \backslash R,R) \geqslant l(R,V(D) \backslash R)$ holds.
        If $Y \neq \emptyset$, then $c(V(D)\backslash R,R) \geqslant l(R,V(D) \backslash R)$ if and only if $z \geqslant \frac{|X|+|Y|-|A|}{e(X,Y)+r|Y|}$.

        \textbf{Case 2.} $s \in R$ and $t \notin R$. Then
        \begin{align*}
            c(V(D) \backslash R,R)&=c(X,Y)+c(t,s)=ze(X,Y)+|A|, \\
            l(R,V(D) \backslash R)&=l(s,X)+l(A \backslash X,B \backslash Y)+l(Y,t)=|X|(1-rz)+|Y|(1-rz).
        \end{align*}
        If $X = \emptyset$ and $Y = \emptyset$, then $e(X,Y)=|X|=|Y|=0$, so $c(V(D) \backslash R,R) \geqslant l(R,V(D) \backslash R)$ holds.
        If $X \neq \emptyset$ or $Y \neq \emptyset$, then $c(V(D)\backslash R,R) \geqslant l(R,V(D) \backslash R)$ if and only if $z \geqslant \frac{|X|+|Y|-|A|}{e(X,Y)+r|X|+r|Y|}$.

        \textbf{Case 3.} $s \notin R$ and $t \in R$. Then
        \begin{align*}
            c(V(D) \backslash R,R)&=c(X,Y)+c(s,A \backslash X)+c(B \backslash Y,t)=ze(X,Y)+|A\backslash X|+|B \backslash Y|, \\
            l(R,V(D) \backslash R)&=l(A \backslash X,B \backslash Y)+l(t,s)=0.
        \end{align*}
        So $c(V(D)\backslash R,R) \geqslant l(R,V(D) \backslash R)$ holds for any $z$.

        \textbf{Case 4.} $s \in R$ and $t \in R$. Then
        \begin{align*}
            c(V(D) \backslash R,R)&=c(X,Y)+c(B \backslash Y,t)=ze(X,Y)+|B \backslash Y|, \\
            l(R,V(D) \backslash R)&=l(s,X)+l(A \backslash X,B \backslash Y)=|X|(1-rz).
        \end{align*}
        If $X = \emptyset$, then $e(X,Y)=|X|=0$, so $c(V(D) \backslash R,R) \geqslant l(R,V(D) \backslash R)$ holds.
        If $X \neq \emptyset$, then $c(V(D)\backslash R,R) \geqslant l(R,V(D) \backslash R)$ if and only if $z \geqslant \frac{|X|+|Y|-|A|}{e(X,Y)+r|X|}$.

        So $(z,\bm{a},\bm{h})$ is a feasible solution of (BLP2) if and only if for every $X \subseteq A$ and $Y \subseteq B$, $z \geqslant \max\left\{\frac{|X|+|Y|-|A|}{e(X,Y)+r|X|},\frac{|X|+|Y|-|A|}{e(X,Y)+r|Y|}\right\}$. Thus, we have
        \begin{equation*}
            L(G \square H)=\max\left\{\left. \frac{|X|+|Y|-|A|}{e(X,Y)+r\min\{|X|,|Y|\}}\right| X \subseteq A, Y \subseteq B \right\} >0,
        \end{equation*}
        and
        \begin{equation*}
            mp_f(G \square H)= \min \left\{ \left. \frac{e(X,Y)+r\min \{|X|,|Y|\}}{|X|+|Y|-|A|} \right| X \subseteq A, Y \subseteq B, |X|+|Y|-|A|>0 \right\}.
        \end{equation*}
    \end{proof}
    Then we obtain an inequality on fractional matching preclusion number of two bipartite graphs and their Cartesian product.
    \begin{The}
        \label{lower_bound_cartisian_product}
        Let $G$ and $H$ be two bipartite graphs. Then $mp_f(G \square H) \geqslant mp_f(G)+\lfloor mp_f(H) \rfloor$ and the equality holds when both $G$ and $H$ are regular.
    \end{The}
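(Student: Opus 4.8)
The plan is to reduce the statement to the regular case already settled in Theorem~\ref{mpstar_product}, using two ingredients: monotonicity of $mp_f$ under passage to a spanning supergraph, and the fact (Corollary~\ref{mpstar_k_factor}) that $H$ contains a $\lfloor mp_f(H)\rfloor$-factor. First I would dispose of the case where $G$ has no perfect matching: then the only constraint in (FMP) for $G$ is $\bm{y}\geqslant\bm{0}$, so $mp_f(G)=0$ and it suffices to prove $mp_f(G\square H)\geqslant\lfloor mp_f(H)\rfloor$. Since $H$ has a $\lfloor mp_f(H)\rfloor$-factor, Lemma~\ref{cartesian_product_pm} produces a $\lfloor mp_f(H)\rfloor$-factor of $G\square H$, and applying Corollary~\ref{mpstar_k_factor} to the bipartite graph $G\square H$ gives $\lfloor mp_f(G\square H)\rfloor\geqslant\lfloor mp_f(H)\rfloor$, which is enough.

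From now on assume $G$ has a perfect matching, so $G=G(A,B)$ with $|A|=|B|$, and put $k=\lfloor mp_f(H)\rfloor$. I would first record the easy monotonicity fact: if $G_1$ is a spanning subgraph of $G_2$ then $\mathcal{M}(G_1)\subseteq\mathcal{M}(G_2)$, so restricting any feasible solution of (FMP) for $G_2$ to $E(G_1)$ yields a feasible solution for $G_1$ of no larger objective value (only nonnegative coordinates are dropped), whence $mp_f(G_1)\leqslant mp_f(G_2)$. Let $F$ be a $k$-factor of $H$ (Corollary~\ref{mpstar_k_factor}); then $F$ is bipartite and $k$-regular, and since it is a spanning subgraph of $H$ the graph $G\square F$ is a spanning subgraph of $G\square H$, so $mp_f(G\square H)\geqslant mp_f(G\square F)$.

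If $k\geqslant1$, Theorem~\ref{mpstar_product} (with $r=k$) gives
\begin{equation*}
    mp_f(G\square F)=\min\left\{\left.\frac{e(X,Y)+k\min\{|X|,|Y|\}}{|X|+|Y|-|A|}\ \right|\ X\subseteq A,\ Y\subseteq B,\ |X|+|Y|-|A|>0\right\},
\end{equation*}
while Theorem~\ref{mpstar} expresses $mp_f(G)$ as the minimum of $e(X,Y)/(|X|+|Y|-|A|)$ over the \emph{same} index set. For every admissible pair $(X,Y)$ we have $|X|\leqslant|A|$ and $|Y|\leqslant|B|=|A|$, hence $\min\{|X|,|Y|\}=|X|+|Y|-\max\{|X|,|Y|\}\geqslant|X|+|Y|-|A|$; dividing by the positive number $|X|+|Y|-|A|$ yields
\begin{equation*}
    \frac{e(X,Y)+k\min\{|X|,|Y|\}}{|X|+|Y|-|A|}\ \geqslant\ \frac{e(X,Y)}{|X|+|Y|-|A|}+k\ \geqslant\ mp_f(G)+k.
\end{equation*}
Taking the minimum over $(X,Y)$ gives $mp_f(G\square F)\geqslant mp_f(G)+k$, and therefore $mp_f(G\square H)\geqslant mp_f(G)+\lfloor mp_f(H)\rfloor$. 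If $k=0$, then $F$ is edgeless and $G\square F$ is a disjoint union of $|V(H)|$ copies of $G$; since the fractional matching preclusion number of a disjoint union of copies of a graph equals that of the graph itself (an optimal (FMP)-solution placed on one copy is optimal, as is clear from the definition), $mp_f(G\square F)=mp_f(G)=mp_f(G)+k$ and the inequality holds again.

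For the equality statement, if $G$ is $s$-regular and $H$ is $r$-regular bipartite, then $G\square H$ is $(s+r)$-regular and bipartite, so by the corollary that the fractional matching preclusion number of a regular bipartite graph equals its degree, $mp_f(G\square H)=s+r=mp_f(G)+\lfloor mp_f(H)\rfloor$. I expect the crux to be the reduction itself: recognizing that Theorem~\ref{mpstar_product}, though stated only for regular $H$, applies to arbitrary bipartite $H$ once $H$ is replaced by its $\lfloor mp_f(H)\rfloor$-factor and monotonicity of $mp_f$ is invoked, after which the comparison with Theorem~\ref{mpstar} rests only on the elementary inequality $\min\{|X|,|Y|\}\geqslant|X|+|Y|-|A|$. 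The non-perfectly-matchable $G$ and the corner case $k=0$ are routine.
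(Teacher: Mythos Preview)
Your proposal is correct and follows essentially the same approach as the paper: replace $H$ by an $r$-factor $H'$ (with $r=\lfloor mp_f(H)\rfloor$), invoke the monotonicity $mp_f(G\square H)\geqslant mp_f(G\square H')$, and then compare the formulas of Theorem~\ref{mpstar_product} and Theorem~\ref{mpstar} via the elementary inequality $\min\{|X|,|Y|\}\geqslant |X|+|Y|-|A|$. Your treatment is in fact slightly more careful than the paper's, since you single out the degenerate case $k=0$ (where Theorem~\ref{mpstar_product} is not directly available) and handle it via the disjoint-union observation.
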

    \begin{proof}
        We suppose $G$ has a bipartition $(A,B)$ and $r=\lfloor mp_f(H) \rfloor$. Then $H$ has an $r$-factor $H'$ by Corollary \ref{mpstar_k_factor}. If $|A| \neq |B|$, then $G$ has no perfect matching, so $mp_f(G)=0$. Moreover, $G \square H$ also has an $r$-factor by Lemma \ref{cartesian_product_pm}. Thus, by Corollary \ref{mpstar_k_factor} we have
        \begin{equation*}
            mp_f(G \square H) \geqslant \lfloor mp_f(G \square H) \rfloor \geqslant r = mp_f(G)+ \lfloor mp_f(H) \rfloor.
        \end{equation*}

        Next we suppose that $|A|=|B|$. By Theorem \ref{mpstar_product}, we suppose there exist two vertex sets $X_0 \subseteq A$ and $Y_0 \subseteq B$ such that
        \begin{equation*}
            mp_f(G \square H')=\frac{e(X_0,Y_0)+r\min \{|X_0|,|Y_0|\}}{|X_0|+|Y_0|-|A|}.
        \end{equation*}
        So by Theorem \ref{mpstar}, we have
        \begin{align*}
            mp_f(G) &\leqslant \frac{e(X_0,Y_0)}{|X_0|+|Y_0|-|A|} \\
                    &=         mp_f(G \square H')-\frac{r\min \{|X_0|,|Y_0|\}}{|X_0|+|Y_0|-|A|} \\
                    &\leqslant mp_f(G \square H')-r \\
                    &=         mp_f(G \square H')-\lfloor mp_f(H) \rfloor.
        \end{align*}
        Since $E(G \square H') \subseteq E(G \square H)$, we have $\mathcal{M}(G \square H') \subseteq \mathcal{M}(G \square H)$, so the constraints in (FMP) for $G \square H'$ are also constraints in (FMP) for $G \square H$.
        Thus, we have $mp_f(G \square H) \geqslant mp_f(G \square H') \geqslant mp_f(G)+\lfloor mp_f(H) \rfloor$.

        Next we show the equality holds when both $G$ and $H$ are regular. If $G$ is $r_1$-regular and $H$ is $r_2$-regular, then $G \square H$ is $(r_1+r_2)$-regular. So by Theorem \ref{regular_bipartite}, $G$, $H$ and $G \square H$ have $r_1$-factor, $r_2$-factor and $(r_1+r_2)$-factor, respectively. By Corollary \ref{mpstar_k_factor}, we have $mp_f(G)=r_1$, $mp_f(H)=r_2$ and $mp_f(G \square H)=r_1+r_2$, then $mp_f(G \square H)=mp_f(G)+\lfloor mp_f(H) \rfloor$.
    \end{proof}


\end{document}